\newtheorem{thm}{Theorem}
\newtheorem{cor}[thm]{Corollary}
\newtheorem{lemma}[thm]{Lemma}
\theoremstyle{definition}
\theoremstyle{remark}
\numberwithin{thm}{section}
\DeclareMathAlphabet{\mathsfsl}{OT1}{cmss}{m}{sl}
\renewcommand{\phi}{\varphi}
\newcommand{\argmin}{\operatorname*{arg\; min}}
\newcommand{\argmax}{\operatorname*{arg\; max}}
\def\reals{\mathbb{R}}
\newcommand{\bepsilon}{\boldsymbol{\epsilon}}
\newcommand{\bx}{\boldsymbol{x}}
\newcommand{\ba}{\boldsymbol{a}}
\newcommand{\bb}{\boldsymbol{b}}
\newcommand{\by}{\boldsymbol{y}}
\newcommand{\bz}{\boldsymbol{z}}
\newcommand{\bX}{\boldsymbol{X}}
\newcommand{\bD}{\boldsymbol{D}}
\def\reals{\mathbb{R}}
\def\bx{\boldsymbol{x}}
\def\bu{\boldsymbol{u}}
\def\b0{\mathbf{0}}
\def\bv{\boldsymbol{v}}
\def\bd{\boldsymbol{d}}
\def\calI{\mathcal{I}}
\def\sign{\mathrm{sign}}
\def\bI{\mathbf{I}}
\title{Element-wise estimation error of a total variation regularized estimator for change point detection}
\author{Teng Zhang
\thanks{The Department
of Mathematics, University of Central Florida, Orlando,
FL, 32765 USA e-mail: teng.zhang@ucf.edu.}
}
\date{\vspace{-5ex}}
\date{}  
\date{\today}
\begin{document}

\newcommand{\e}{\epsilon}
\newcommand{\CH}{{\mathcal H}}
\newcommand{\CB}{\mathcal B}
\newcommand{\Tin}{T_{\text{in}}}
\newcommand{\Tex}{T_{\text{ex}}}
\newcommand{\Iin}{I_1}
\newcommand{\Iex}{I_2}
\newcommand{\mbr}{{\mathbb R}}
\maketitle

\begin{abstract}
This work studies the total variation regularized $\ell_2$ estimator (fused lasso) in the setting of a change point detection problem. Compared with existing works that focus on the sum of squared estimation errors, we give bound on the element-wise estimation error. Our bound is nearly optimal in the sense that the sum of squared error matches the best existing result, up to a logarithmic factor. This analysis of the element-wise estimation error allows a screening method that can approximately detect all the change points. We also generalize this method to the muitivariate setting, i.e., to the problem of group fused lasso.
\end{abstract}
\vspace{-1ex}
\section{Introduction}
The problem of change point detection has applications in various fields including statistics, engineering, economics, and biostatistics, where the goal is to partition a signal into several homogeneous segments of variable durations, in which some quantity remains approximately constant over time. This issue was addressed in a large literature (see~\cite{bhattacharya} for a review), where the problem was tackled both from an online/sequential and an offline/retrospective points of view. In the online point of view, the data is received as a stream and the goal is to detect the change  in real time~\cite{moustakides2008}; in the offline point of view, the data is received as a data set and the goal is to find the change point based on the full data set. In this work, we will focus on the offline approach.

When there is a single change point, the detection of the change point is usually obtained by running a statistical test, for example, a log likelihood ratio test~\cite{csorgo1997limit}. The problem is more difficult when there are multiple change points, and there are methods based on exhaustive search with the Schwarz criterion~\cite{Yao1988,Yao1989}, or with binary segmentation method~\cite{doi:10.1093/biostatistics/kxh008,doi:10.1093/bioinformatics/btl646,fryzlewicz2014}, or optimization methods such as nonconvex approach \cite{doi:10.1111/rssb.12047} or the convex approach with $\ell_1$ penalization~\cite{doi:10.1093/bioinformatics/bti646,Tibshirani05sparsityand,Tibshirani2008}. The $\ell_1$ penalization method,  also referred as fused lasso or total variation regularized $\ell_2$ estimator in literature, is widely used for image denoising and piecewise constant signal estimation with respect to a given undirected graph~\cite{Rudin1992}. There are also methods based on two-step procedures, which first define a local diagnostic measure to  find a list of candidates that are most likely to be the change points, and then investigate these candidates, by methods such as hypothesis testing \cite{doi:10.1080/15326348808807089,Gijbels1999,Niu2012}. While there is also abundant literature that addresses the  change-point estimation problem from a Bayesian point of view; see \cite{ruanaidh1996numerical,Fearnhead2006}, but we will not consider this approach since it is very different in both modeling and algorithm.

The difficulty with the exhaustive search method lies in is its computational cost: the algorithm based on dynamic programming has a computational complexity of $O(n^2)$~\cite{10.2307/2673465}, where $n$ is the number of signals. Therefore, it is computationally expensive for large $n$. The binary segmentation method, which applies the single change-point test recursively to determine all the change-points, but it would hardly detect small segments buried in larger ones~\cite{doi:10.1093/biostatistics/kxh008}. Remedies in \cite{doi:10.1093/bioinformatics/btl646} again require algorithms with computational costs of $O(n^2)$. Optimization problems that involve nonconvex penalties or constraints are usually solved by dynamic programming, which again requires $O(n^2)$ \cite{doi:10.1111/rssb.12047}. In comparison, there are faster algorithms for fused lasso since it is convex, for example, there is an algorithm with a computational cost $O(n\log n)$ in \cite{Harchaoui2010}. In addition, many exisiting efficient algorithms for lasso can also be applied total variation regularized estimator (fused lasso) as a special case.

This work aims to develop new theories for change point detection problem with total variation regularized estimator (fused lasso). While there are many existing theoretical works on the fused lasso, most of them focus on the sum of squared estimation errors,  and they tend to assume that the regularization parameter is fixed as a prespecified value. The approach in this work is very different from the existing analysis: first, it investigates the element-wise estimation error for each of the observations, and second, the analysis holds for any regularization parameter. Since this method gives element-wise estimation errors, it can be used to construct a screening method to approximately detect all change points. In addition, this work investigates its generation to the setting of group fused lasso, which has not been discussed in existing works to the best of our knowledge. 

The manuscript is organized as follows. Background on fused lasso and existing theoretical guarantees are reviewed in Section 2. Then we proceed with our theoretical analysis of fused lasso in Section 3, and its generalization to group fused lasso in Section 4.

\section{Mathematical Background and Context of Proposed Research}\label{sec:background}
\subsection{Background}
The fused lasso/total variation regularized $\ell_2$ estimator estimates a signal from its noisy observation, based on the assumption that the signal tends to be piecewise constant. Specifically, we consider the following model:
\begin{equation}\label{eq:model}
y_i=x_i+\epsilon_i, 1\leq i\leq n,\,\,\,\, x_i, y_i,\epsilon_i\in\reals.
\end{equation}
Here $\{x_i\}_{i=1}^n$ is the underlying signal that exhibits the piecewise constant property such that the total number of distinct values in the set is given by $K$:  there exists $\{n_k\}_{k=1}^K$ such that $1=n_1<n_2<\cdots<n_K<n=n_{K+1}-1$ and for any $1\leq k\leq K$,
\[
x_{n_k}=x_{n_k+1}=\cdots=x_{n_{k+1}-1},
\]
$\{y_i\}_{i=1}^n$ is the noisy observation of the underlying signal, and $\epsilon_i$ is the noise that is i.i.d. from the normal distribution $N(0,\sigma^2)$. 
The two questions are, first, can we recover the signal $\{x_1,\cdots,x_n\}$ from its noisy observation $y_1,\cdots,y_n$? Second, can we detect the change points $\{n_2,\cdots,n_K\}$? The total variation regularized estimation/fused lasso approach proposes to answer these questions by solving the following optimization problem
\begin{equation}\label{eq:optimization}
\hat{x}_1,\cdots,\hat{x}_n=\argmin_{x_1,\cdots,x_n\in\reals}\sum_{i=1}^n(x_i-y_i)^2+\lambda\sum_{i=1}^{n-1}|x_i-x_{i+1}|.
\end{equation}
This problem is named ``fused lasso'' since it can be considered as a special case of the lasso problem. Let $\bD\in\reals^{n-1\times n}$ be defined such that $\bD_{i,i}=1$ and $\bD_{i,i+1}=-1$, and other elements are all zero, then \eqref{eq:optimization} becomes
\begin{equation}\label{eq:lasso}
\min \sum_{i=1}^n(x_i-y_i)^2 +\lambda \|\bD(x_1,\cdots,x_n)^T\|_1,
\end{equation} which can be considered as a generalized lasso problem \cite{Tibshirani2011,Zhu2015,Arnold2016}, and with a reformulation, it is equivalent to the lasso problem.

There are several ways of generalizing the model \eqref{eq:model} and its associated optimization problem  \eqref{eq:optimization}. In particular, we will investigate
%
%
group fused lasso \cite{Alaiz:2013:GFL:2731664.2731674,WytSraKol2014,bleakley:hal-00602121} in this work. For this problem, instead of the assumption that $x_i, y_i,\epsilon_i\in\reals$ as in \eqref{eq:model}, we assume that they are vectors, then the model \eqref{eq:model} becomes
\begin{equation}\label{eq:gfl_model}
\by_i=\bx_i+\bepsilon_i, 1\leq i\leq n,\,\,\,\,\by_i,\bx_i,\bepsilon_i\in\reals^p,\,\,\,\bepsilon_i\sim N(\b0,\sigma^2\bI_{p\times p})
\end{equation}
and the optimization problem \eqref{eq:optimization} becomes
\begin{equation}\label{eq:gfl}
\hat{\bx}_1,\cdots,\hat{\bx}_n=\argmin_{\bx_1\cdots,\bx_n\in\reals^p}\sum_{i=1}^n\|\by_i-\bx_i\|^2+\lambda\sum_{i=1}^{n-1}\|\bx_i-\bx_{i+1}\|.
\end{equation}
This model has applications when the observations are vectors instead of scalars.

\subsection{Existing theoretical results}\label{sec:currentheory}
We begin by reviewing some existing results on the property of the solution to \eqref{eq:optimization}. In fact, many works have been done for bounding the sum of squared estimation error $\sum_{i=1}^n(\hat{x}_i-x_i)^2$. For example, \cite{mammen1997} showed that when $\sum_{i=1}^{n-1}\|x_i-x_{i+1}\|\leq C_n$  for a nondecreasing sequence $C_n$, then for the choice that  $\lambda=O(n^{1/3}C_n^{-1/3})$, the estimator from \eqref{eq:optimization} satisfies
\[
\frac{1}{n}\sum_{i=1}^n(\hat{x}_i-x_i)^2=O(n^{-2/3}C_n^{2/3}).
\]

Dalalyan et al. \cite{dalalyan2017} showed that when $\lambda=\sigma\sqrt{2n\log (n/\delta)}$, the estimation error is bounded above by
\[
\frac{1}{n}\sum_{i=1}^n(\hat{x}_i-x_i)^2\leq c\sigma^2\frac{K\log(n/\delta)}{n}\left(\log n+\frac{n}{W_n}\right),
\]
where probability $1-2\delta$, for all $\delta>0$ and $n\geq N$, where $c,N>0$ are constants and $W_n$ is the minimal distance between the jumps, i.e., $W_n=\min_{1\leq k\leq K}(n_{k+1}-n_{k})$.

Harchaoui and L\'{e}vy-Leduc \cite[Proposition 2]{Harchaoui2010} proves that the fused lasso has estimation error $\frac{1}{n}\sum_{i=1}^n(\hat{x}_i-x_i)^2=O(\log n/n)$ when $\lambda$ is chosen to be in the order of $\sqrt{\log n/n}$, but the theory assumes that the number of change points in the estimator $\hat{\bx}$ is bounded, which is not verified and might be impractical.

Lin et al. \cite{Lin2016} showed that when $\lambda=(nW_n)^{1/4}$, the estimation error has the property of
\begin{equation}\label{eq:lin2016}
\frac{1}{n}\sum_{i=1}^n(\hat{x}_i-x_i)^2\leq \gamma^2 c\sigma^2\frac{K}{n}\left((\log K+\log\log n)\log n+\sqrt{\frac{n}{W_n}}\right)
\end{equation}
with probability as least $1-\exp(C\gamma)$, for some $c,C$ depending on $\sigma$.

Guntuboyina et al. \cite{Guntuboyina2017} analyzed the fused lasso problem along with trend filtering. For the fused lasso problem \eqref{eq:optimization} it shows that if for all $1\leq k\leq K$ such that $x_{n_{k-1}}-x_{n_{k-1}-1}$ and $x_{n_{k}}-x_{n_{k}-1}$ have different signs,  $n_k-n_{k-1}>cn/(K+1)$, then the optimal $\lambda$ produces $\hat{\bx}$ such that
\begin{equation}\label{eq:Guntuboyina2017}
\frac{1}{n}\sum_{i=1}^n(\hat{x}_i-x_i)^2\leq C\sigma^2\frac{K+1}{n}\log\left(\frac{en}{K+1}\right)+\frac{4\sigma^2\delta}{n}
\end{equation}
with probability at least $1-\exp(-\delta)$. However, while the optimal choice of $\lambda$ is given in the paper, it is rather complicated and difficult for practical use.

Ortelli and van de Geer \cite{Ortelli2018} recently improved the bounds in Dalalyan et al. \cite{dalalyan2017}:  when  $\lambda=O(\sigma\sqrt{2n\log (n/\delta)})$, then with probability $1-\delta$, the estimation error is bounded above by
\begin{equation}\label{eq:Ortelli2018}
\frac{1}{n}\sum_{i=1}^n(\hat{x}_i-x_i)^2\leq O\left(\sigma^2\frac{K\log(n/\delta)}{n}\left(\log \frac{n}{K}+\frac{n}{m_H}\right)\right),
\end{equation}
where the $m_H$ is the harmonic mean between the distances of jumps, i.e., the harmonic mean of $n_{k+1}-n_k$ for all $1\leq k\leq K$.

Besides controlling the sum of squared estimation errors $\frac{1}{n}\sum_{i=1}^n(\hat{x}_i-x_i)^2$, another important goal is to detect the change points $n_1,\cdots,n_K$. There have been some works that discuss how to apply fused lasso for this task as well (though not as many as the works on estimation error). Mathematically, we would like to estimate a set of change points from $\hat{\bx}$, denoted by $\hat{S}\in\{1,\cdots,n\}$, and find its distance with the set of true change points $S=\{n_1,\cdots,n_K\}$. A measure of the distance  between two sets $A$ and $B$ can be defined as follows:
\[
d_H(A,B)=\max(d(A,B),d(B,A)), \text{where}\,\,d(A,B)=\max_{b\in B}\min_{a\in A}|a-b|,
\]
and then the goal is to construct a set $\hat{S}$ such that $d_H(\hat{S},S)$ is minimized.

While intuitively one can use the change points of $\hat{x}_1, \cdots, \hat{x}_n$ to estimate $S$, i.e., let $\hat{S}=\{1\leq i]\leq n-1: \hat{x}_i\neq \hat{x}_{i+1}\}$, the method turns out to be impractical. Rojas and Wahlberg \cite{Rojas2014} established an impossibility result for the fused lasso estimator when $x_1,\cdots, x_n$ exhibits a ``staircase'' pattern, which means that $\{\hat{x}_{n_{k+1}}-\hat{x}_{n_{k}}\}_{k=1,\cdots,K}$ has two consecutive positive or negative values; specifically, these authors proved that under this setting, $d_H(\hat{S},S)/n$ remains bounded away from zero with nonzero asymptotic probability. That being said, for non-staircase patterns in $\bx$, the authors also showed, under certain assumptions, that $d_H(\hat{S},S)/n$ converges to zero in probability.

Qian and Jia \cite{Qian2016} studied a modification
of the fused lasso defined by transforming the fused lasso problem \eqref{eq:optimization} into a lasso problem with
particular design matrix $\bX$, and then applying a step that preconditions $\bX$. The authors concluded that exact recovery is possible with probability tending to 1, as long as the minimum signal gap $H_n$ and tuning parameter satisfy $H_n\geq \lambda=O(\log n)$. But this is a very strong requirement on the scaling of the signal gap $H_n$, as Sharpnack et al. \cite{pmlr-v22-sharpnack12} showed, even simple pairwise thresholding achieves exact recovery as well.

Harchaoui and L\'{e}vy-Leduc \cite[Proposition 3]{Harchaoui2010}  analyze the detection of changed point by $\hat{\bx}$ and showed that if the change points in $\hat{\bx}$ is exactly $K$, and when $\lambda$ is appropriately chosen, then every change point in $\bx$ would be detected approximately  in the sense that there exists a change point in $\hat{\bx}$, which has a location that is different by at most $n\delta_n$, where $\delta_n$ is chosen such that $n\delta_n\leq W_n$, $n\delta_n H_n^2/\log n\rightarrow\infty$, $n\delta_n W_n/\lambda\rightarrow\infty$, and $n\delta_n W_n^2/\log (n^3/\lambda^2)\rightarrow\infty$, and here $W_n$ is the smallest distance between change points: $W_n=\min(n_2-n_1,n_3-n_2,\cdots,n-n_K)$ and $H_n$ is the smallest distances between ``levels'': $H_n=\min_{k=1,\cdots,K-1}|x_{n_k}-x_{n_{k+1}}|$. However, it is unclear whether the assumption ``the change points in $\hat{\bx}$ is exactly $K$'' is reasonable in their setting.

Lin et al. \cite{Lin2016} construct a simple filtering-based technique to a general estimator $\{\hat{x}_i\}_{i=1}^n$ and obtain a set $\hat{S}_F(\hat{\bx})$ such as its distance to the change points $S=[n_1,\cdots,n_K]$ is bounded by $nR_nv_n/W_n^2$ almost surely as $n\rightarrow\infty$, where $\frac{1}{n}\sum_{i=1}^n(\hat{x}_i-x_i)^2=O(R_n)$, and $v_n$ is an arbitrary diverging sequence: $\Pr\left(d_H(\hat{S}_F(\hat{\bx}),S)\leq nR_nv_n/H_n^2\right)\rightarrow 1.$ For the fused lasso estimator, two results are as follows: 1. If $C_n=\sum_{k=1,\cdots,K-1}|x_{n_k}-x_{n_{k+1}}|$, $W_n=\Theta(n)$, $H_n=w(n^{1/6}C_n^{1/3}/\sqrt{W_n})$, then $\lambda=\Theta(n^{1/3}C_n^{-1/3})$ leads to
\[
\Pr\left(d_H(\hat{S}_F(\hat{\bx}),S)\leq O\left(\frac{n^{1/3}C_n^{2/3}}{H_n^2}\right)\right)\rightarrow 1.\]
2. If $K=O(1)$, $W_n=\Theta(n)$, $H_n=w(\sqrt{\log n(\log\log n)/n})$, then $\lambda=\Theta(\sqrt{n})$ leads to
\[
\Pr\left(d_H(\hat{S}_F(\hat{\bx}),S)\leq O\left(\frac{\log n \log\log n}{H_n^2}\right)\right)\rightarrow 1.\]

There are also works that discuss the properties of some closely related problems. For example, \cite{Qian2013} studies the structural change problems (which can be considered as the fused lasso problem \eqref{eq:optimization} in a regression setting) and studies its asymptotic properties. It showed that if $\lambda$  satisfies certain assumptions, then the group fused Lasso procedure cannot under-estimate the number of change points, and all change points can be consistently detected in the sense that for every true change point, there is an estimated change point close by. It also proposed a BIC-based method to choose $\lambda$ such that the estimated number of  change points is correct. However, it is unclear that if this chosen $\lambda$ still satisfies the assumption on $\lambda$ such that all underlying change points are detected.

\section{Fused lasso}\label{sec:fused}
As reviewed in Section~\ref{sec:currentheory}, there are two main goals in analyzing the solution to \eqref{eq:optimization}. First, find an upper bound for the sum of squared estimation errors $\sum_{i=1}^n(\hat{x}_i-x_i)^2$. Second,  finds the set of changes points from the estimation $\{\hat{x}_i\}_{i=1}^n$. In this section. we will first present the main result in Theorem~\ref{thm:fused} that analyzes the sum of squared estimation error $\hat{x}_i-x_i$ for all $1\leq i\leq n$. Based on this result, we will investigate the sum of squared estimation errors in Corollary~\ref{cor:estimationerror} and the detection of change points in Corollary~\ref{cor:detection}. The proof of Theorem~\ref{thm:fused} is given in Section~\ref{sec:proof_fused}, and the main ingredient of the proof, Theorem~\ref{thm:main}, is proved in Section~\ref{sec:proof_main}.

\subsection{Main results of the fused lasso}\label{sec:fused2}
\begin{thm}\label{thm:fused}
For all $1\leq k\leq K$,  let $m_k=n_{k+1}-n_{k}$ be the length of the $k$-th interval. For $1\leq i \leq n$, assume that the $i$-th observation lies in the $k(i)$-th interval, that is, $n_{k(i)}\leq i\leq n_{k(i)+1}-1$, and
\[
d_i=\min(i+1-n_{k(i)}, n_{k(i)+1}-i),
\]
that is, $d_i$ is its distance to the nearest change point.  Let $M_y=2\sigma\sqrt{\log n+\log t}$, then with probability $1-1/t^2$, for all $1\leq i\leq n$ we have
\begin{equation}\label{eq:main01}
|\hat{x}_i-x_i|\leq \max\left(\frac{M_y}{\sqrt{d_i}},\frac{M_y^2}{4\lambda},\frac{2\lambda}{m_{k(i)}}+\frac{2M_y}{\sqrt{m_{k(i)}}}\right).
\end{equation}
\end{thm}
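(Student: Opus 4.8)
The plan is to combine the subgradient optimality conditions of \eqref{eq:optimization}, which reduce everything to a statement about partial sums of $\hat{x}$ and $y$, with a single uniform Gaussian control of the partial sums of the noise. First I would record the stationarity conditions: since the objective is strictly convex there is a unique minimizer $\hat{x}$, and there exist numbers $s_i$, each a subgradient of $t\mapsto|t|$ at $t=\hat{x}_i-\hat{x}_{i+1}$ (so $s_i=\sign(\hat{x}_i-\hat{x}_{i+1})$ when $\hat{x}_i\neq\hat{x}_{i+1}$ and $s_i\in[-1,1]$ otherwise), with the convention $s_0=s_n=0$, such that $\hat{x}_j=y_j+\tfrac{\lambda}{2}(s_{j-1}-s_j)$ for every $j$. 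Summing this over a range of indices produces the telescoping identity
\[
\sum_{j=a}^{b}\hat{x}_j=\sum_{j=a}^{b}y_j+\frac{\lambda}{2}\left(s_{a-1}-s_b\right),\qquad 1\le a\le b\le n,
\]
which, together with $|s_i|\le 1$, is the only structural fact about $\hat{x}$ I will need.

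Next I would isolate the lone probabilistic ingredient. Define the event
\[
\mathcal E=\Big\{\,\big|\sumnl_{j=a}^{b}\epsilon_j\big|\le M_y\sqrt{b-a+1}\ \text{ for all }1\le a\le b\le n\,\Big\}.
\]
Because $\sum_{j=a}^{b}\epsilon_j\sim N(0,(b-a+1)\sigma^2)$ and $M_y=2\sigma\sqrt{\log n+\log t}$, a standard Gaussian tail bound makes each of the at most $n^2$ intervals fail with probability of order $(nt)^{-2}$, so a union bound gives $\Pr(\mathcal E)\ge 1-1/t^2$. Everything after this is deterministic and assumes $\mathcal E$.

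Fix $i$, set $k=k(i)$, $\mu=x_{n_k}$, $v=\hat{x}_i$, and bound $v-\mu$ from above (the lower bound follows by running the argument on $-y$). The key object is the maximal interval $[a^*,b^*]\ni i$ on which $\hat{x}_j\ge v$; by maximality $\hat{x}_{a^*-1}<v$ and $\hat{x}_{b^*+1}<v$ at genuine endpoints, forcing $s_{a^*-1}=-1$ and $s_{b^*}=+1$ there. I then split into three cases according to how far this super-level run reaches past the change points $n_k,n_{k+1}$ bounding the block. If $[a^*,b^*]\subseteq[n_k,n_{k+1}-1]$, the telescoping identity on $[a^*,b^*]$ together with $\sum_{j=a^*}^{b^*}\hat{x}_j\ge (b^*-a^*+1)v$ gives $v-\mu\le M_y/\sqrt{L}-\lambda/L$ with $L=b^*-a^*+1$; since $M_y/\sqrt{L}-\lambda/L\le M_y^2/(4\lambda)$ for every $L\ge 1$ (equivalently $(M_y-2\lambda)^2\ge 0$), this yields the second term. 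If the run reaches past exactly one change point, say $b^*\ge n_{k+1}$ while $a^*\ge n_k$, I restrict to the in-block sub-interval $[a^*,n_{k+1}-1]$, whose length is at least $d_i$ precisely because the run crosses the change point; here the right endpoint lies inside the run, so $s_{n_{k+1}-1}\in[-1,1]$ and hence $s_{a^*-1}-s_{n_{k+1}-1}\le 0$, killing the $\lambda$-correction and giving $v-\mu\le M_y/\sqrt{d_i}$, the first term. If the run reaches past both change points, then $\hat{x}\ge v$ on all of $[n_k,n_{k+1}-1]$, and the identity on the whole block gives $v-\mu\le M_y/\sqrt{m_k}+\lambda/m_k$, bounded by the third term. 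Taking the maximum over the cases proves \eqref{eq:main01}.

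The hard part will be the straddling case, where the super-level run crosses a true change point: there the data average over the run is no longer $\mu$, so $v$ cannot be controlled by a single average. The resolution is to pass to the portion of the run inside the correct block — whose length is automatically at least $d_i$ — and to exploit that the subgradient at the crossed endpoint is $\ge -1$ (because $\hat{x}$ exceeds $v$ on both of its sides), so the $\lambda$-term has a favorable sign; this is exactly what produces the clean $M_y/\sqrt{d_i}$ bound with no residual $\lambda$-dependence. A secondary technicality is the behaviour at the global endpoints $a^*=1$ or $b^*=n$, where $s_0=s_n=0$ weakens the sign cancellation; I would dispatch these by the same telescoping estimate, checking that the resulting constants remain dominated by the three stated terms.
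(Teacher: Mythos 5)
Your proposal is correct, but it reaches the bound by a genuinely different route than the paper. The paper first reduces to a single-block subproblem with fixed boundary values (the problem \eqref{eq:optimization1}, analyzed in Theorem~\ref{thm:main}) and then runs perturbation arguments ($G(\{\hat{x}_i\})\leq G(\{\tilde{x}_i\})$ for candidates obtained by shifting a constant run by $\epsilon$), organized around the global shape of the solution on the block: a structural lemma (Lemma~\ref{lemma:localmax}) ruling out interior local extrema of magnitude above $M_y^2/(4\lambda)$, followed by a case analysis on whether the set $\{i:|\hat{x}_i|\leq M_y^2/(4\lambda)\}$ is empty and on the signs of the boundary values. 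You instead work globally with the stationarity system $\hat{x}_j=y_j+\tfrac{\lambda}{2}(s_{j-1}-s_j)$ and organize the cases around the maximal super-level run of $\hat{x}$ at level $\hat{x}_i$, according to whether that run stays inside the block, crosses one change point, or crosses both. The underlying inequalities are the same in substance --- the paper's $\epsilon$-perturbations of constant runs are exactly first-order conditions, i.e., your telescoped partial sums --- but your decomposition is more local: you never need the reduction to the boundary-value subproblem, the monotone structure of the solution, or the local-extremum lemma, and each of the three terms in \eqref{eq:main01} falls out of exactly one case. The one point you defer as a technicality, namely $a^*=1$ or $b^*=n$ where $s_0=s_n=0$ halves the $\lambda$-cancellation and would inflate $M_y^2/(4\lambda)$ to $M_y^2/(2\lambda)$, does resolve cleanly: in that situation the run length $L$ satisfies $L\geq i\geq d_i$ (respectively $L\geq n+1-i\geq d_i$), so one drops the $\lambda$-correction entirely and absorbs the estimate into the $M_y/\sqrt{d_i}$ term; with that observation spelled out, the argument is complete.
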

We remark that this result is stronger than existing results in the literature \cite{Harchaoui2010,dalalyan2017,Lin2016,Guntuboyina2017,Ortelli2018} in the sense that it controls the element-wise estimation error for each of the signals, instead of controlling the sum of squared estimation errors. In addition, our estimation holds for any choice of $\lambda$. As a result, it gives a more detailed description of the solution of the fused lasso.
Applying Theorem~\ref{thm:fused}, with some calculation we obtain the following result on the sum of squared estimation errors of  \eqref{eq:optimization}:
\begin{cor}\label{cor:estimationerror}
With probability $1-1/t^2$, the following inequality holds for $M_y$ defined in Theorem~\ref{thm:fused}
\[
\frac{1}{n}\sum_{i=1}^n|\hat{x}_i-x_i|^2\leq \frac{M_y^4}{16\lambda^2}+\frac{8\lambda^2}{n}\sum_{k=1}^K\frac{1}{m_k}+\frac{2}{n}M_y^2\left(4+K+\sum_{k=1}^K\log m_k\right).
\]
\end{cor}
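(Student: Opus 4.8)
The plan is to work on the probability-$1-1/t^2$ event on which the element-wise bound \eqref{eq:main01} of Theorem~\ref{thm:fused} holds simultaneously for all $i$, and simply to square that bound and sum it; since Corollary~\ref{cor:estimationerror} asserts its conclusion on the same random event, no further probabilistic argument is needed. Abbreviate the three quantities inside the maximum as $a_i=M_y/\sqrt{d_i}$, $b_i=M_y^2/(4\lambda)$ and $c_i=2\lambda/m_{k(i)}+2M_y/\sqrt{m_{k(i)}}$. All three are nonnegative, so $|\hat x_i-x_i|^2\le\max(a_i,b_i,c_i)^2\le b_i^2+\max(a_i,c_i)^2$. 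The $b_i$ contribution is constant in $i$, so $\tfrac1n\sum_i b_i^2=M_y^4/(16\lambda^2)$ reproduces the first displayed term immediately, and the remaining task is to bound $\tfrac1n\sum_i\max(a_i,c_i)^2$ by the other two terms.

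Next I would peel $c_i$ apart using $\max(a_i,c_i)\le \tfrac{2\lambda}{m_{k(i)}}+\max\bigl(a_i,\tfrac{2M_y}{\sqrt{m_{k(i)}}}\bigr)$ and then $(\,\cdot+\cdot\,)^2\le 2(\cdot)^2+2(\cdot)^2$. The $\lambda$-piece is constant across the $m_k$ indices of the $k$-th interval, so $\sum_i 2(2\lambda/m_{k(i)})^2=\sum_k m_k\cdot 8\lambda^2/m_k^2=8\lambda^2\sum_k 1/m_k$, giving the second displayed term after dividing by $n$. The engine for the $M_y^2$ term is the distance sum. Writing $i=n_k+j$ with $0\le j\le m_k-1$ one has $d_i=\min(j+1,\,m_k-j)$, so as $i$ ranges over the interval the values of $d_i$ climb $1,2,3,\dots$ up to about $m_k/2$ and descend again, each integer value being taken at most twice; hence $\sum_{i:\,n_k\le i\le n_{k+1}-1}1/d_i\le 2\sum_{v=1}^{\lceil m_k/2\rceil}1/v\le 2(1+\log m_k)$ by the harmonic-series estimate. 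Summed over the $K$ intervals this contributes $2M_y^2\bigl(K+\sum_k\log m_k\bigr)$, which accounts for the $\log m_k$ and part of the $K$ contributions.

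The delicate step, and the one I expect to be the main obstacle, is handling the leftover $\max\bigl(a_i,2M_y/\sqrt{m_{k(i)}}\bigr)$ without double-charging it against the distance sum. Charging $2M_y/\sqrt{m_k}$ in full to every index would add a term of order $M_y^2K$ on top of the $M_y^2K$ already produced above, inflating the coefficient of $K$ past the stated value. The fix is to keep the maximum intact and partition each interval according to whether $1/d_i$ or $4/m_k$ is the larger: near the two endpoints $d_i$ is small and $M_y/\sqrt{d_i}$ dominates, so those indices are already paid for by the harmonic sum, whereas only the roughly central indices (where $d_i\gtrsim m_k/4$) let the constant $2M_y/\sqrt{m_k}$ win, and these contribute only a bounded amount per interval. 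Combining the two regimes, collecting all $M_y^2$ pieces into the factor $2\bigl(4+K+\sum_k\log m_k\bigr)$, and dividing by $n$ assembles the three displayed terms. The only genuine work is this bookkeeping of constants through the $\max$ together with the harmonic estimate; everything else is routine summation using $\sum_i 1/m_{k(i)}=K$ and $\sum_i 1/m_{k(i)}^2=\sum_k 1/m_k$.
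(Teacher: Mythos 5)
Your overall strategy is the paper's: work on the event of Theorem~\ref{thm:fused}, square the element-wise bound, and sum interval by interval, using the harmonic-series estimate for the $M_y/\sqrt{d_i}$ term and $(a+b)^2\le 2a^2+2b^2$ for the $2\lambda/m_k+2M_y/\sqrt{m_k}$ term. The paper does this more bluntly than you do: it bounds $\max(\cdot,\cdot,\cdot)^2$ by the sum of the squares of all the candidates, obtaining per interval $m_k M_y^4/(16\lambda^2)+8\lambda^2/m_k+8M_y^2+2M_y^2(\log m_k+1)$, and then just adds over $k$. In particular it makes no attempt to avoid ``double-charging'': the constant $8M_y^2$ is incurred in every interval, so the paper's own proof actually yields $2M_y^2\bigl(5K+\sum_k\log m_k\bigr)$ rather than the $2M_y^2\bigl(4+K+\sum_k\log m_k\bigr)$ printed in the statement (an apparent typo, $4+K$ for $4K+K$). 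You should not contort the argument to hit the literal constant $4$.

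That said, your final assembly as written has two concrete problems. First, the ``delicate step'' does not deliver what you claim: in the central region where $2M_y/\sqrt{m_{k(i)}}$ beats $M_y/\sqrt{d_i}$ there are on the order of $m_k$ indices each contributing $4M_y^2/m_k$, so the contribution is indeed bounded \emph{per interval} --- but a bounded amount per interval summed over $K$ intervals is still $\Theta(K)M_y^2$, not an absolute constant, so ``collecting all $M_y^2$ pieces into $2(4+K+\sum_k\log m_k)$'' is not justified; the $4$ necessarily becomes (a multiple of) $K$. Second, your nested decomposition $\max(a_i,c_i)\le 2\lambda/m_{k(i)}+\max(a_i,2M_y/\sqrt{m_{k(i)}})$ followed by $(x+y)^2\le 2x^2+2y^2$ attaches a factor of $2$ to \emph{both} pieces; you keep it for the $\lambda$ piece (correctly getting $8\lambda^2\sum_k 1/m_k$) but drop it for the other, so the harmonic-sum contribution is really $2\cdot 2M_y^2(1+\log m_k)=4M_y^2(1+\log m_k)$ per interval, i.e.\ the coefficient of $\sum_k\log m_k$ comes out as $4M_y^2$, not the stated $2M_y^2$. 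The paper avoids this by applying $(x+y)^2\le 2x^2+2y^2$ only inside the third candidate and never wrapping the $M_y/\sqrt{d_i}$ term in an extra sum. Replacing your nested maximum with the plain bound $\max(a_i,b_i,c_i)^2\le a_i^2+b_i^2+c_i^2$ fixes both issues and recovers the paper's proof (with the $5K$ constant).
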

\begin{proof}
For the $k$-th interval, that is, for $i\in[n_k,n_{k+1})$, we have
\begin{align}\label{eq:est_main}
\sum_{i=n_k}^{n_{k+1}-1}|\hat{x}_i-x_i|^2\leq& \sum_{i=1}^{m_k}\left(\frac{M_y}{\sqrt{i}}\right)^2+\left(\frac{M_y}{\sqrt{m_k+1-i}}\right)^2+\left(\frac{M_y^2}{4\lambda}\right)^2+\left(\frac{2\lambda}{m_k}+\frac{2M_y}{\sqrt{m_k}}\right)^2
\\
\leq& m_k\left(\frac{M_y^2}{4\lambda}\right)^2+8\frac{\lambda^2}{m_k}+8M_y^2+2M_y^2(\log m_k+1),\nonumber
\end{align}
where the second inequality uses the well-known result of $1+\frac{1}{2}+\cdots+\frac{1}{m}\leq \log m+1$ and $(a+b)^2\leq 2a^2+2b^2$.

Combining the estimation in \eqref{eq:est_main} for $k=1,\cdots,K$, Corollary~\ref{cor:estimationerror} is proved.\end{proof}
We may compare Corollary~\ref{cor:estimationerror} with previous results in \cite{dalalyan2017,Lin2016,Guntuboyina2017,Ortelli2018}. For example,  \cite{Lin2016} shows that when $\lambda=(nW_n)^{\frac{1}{4}}$, where $W_n=\min_{i=1,\cdots,K}m_i$, then  \begin{equation}\label{eq:lin2016}
\frac{1}{n}\|\hat{\bx}-\bx\|^2\leq \gamma^2 c\sigma^2\frac{K}{n}\left((\log K+\log\log n)\log n+\sqrt{\frac{n}{W_n}}\right),\,\,\text{w.p. $1-\exp(-C\gamma)$}
\end{equation}
for $c, C$ depending on $\sigma$. If we plug in $\lambda=(nW_n)^{\frac{1}{4}}M_y$ to Corollary~\ref{cor:estimationerror}, we have
\begin{align*}\frac{1}{n}\|\hat{\bx}-\bx\|^2\leq &\frac{M_y^4}{16\sqrt{nW_n}}+\frac{8\sqrt{W_n}}{\sqrt{n}}\sum_{i=1}^K\frac{1}{m_i}+\frac{2}{n}M_y^2\left(4+K+\sum_{i=1}^K\log m_i\right)
\\\leq
&\frac{K}{n}M_y^2\left(2\left(5+\log \frac{n}{K}\right)+\sqrt{\frac{n}{W_n}}\left(\frac{8}{K}\sum_{i=1}^K\frac{W_n}{m_i}+\frac{1}{16K}\right)\right).
\end{align*}
 First, our estimation bound gives a more precise description of the dependence on $\sigma$. Second, considering that $M_y=O(\sqrt{\log n})$ and $\frac{8}{K}\sum_{i=1}^K\frac{W_n}{m_i}\leq 8$,  these two estimation bounds are in the same order up to a factor of $\log n$: the coefficient of $\sqrt{\frac{n}{W_n}}$ in \eqref{eq:lin2016}, $1$, is replaced with $M_y^2(\frac{8}{K}\sum_{i=1}^K\frac{W_n}{m_i}+\frac{1}{16K})$, and the term $(\log K+\log\log n)\log n$ in \eqref{eq:lin2016} is replaced with $M_y^2\log\frac{n}{K}$.

In the works \cite{dalalyan2017,Ortelli2018}, $\lambda$ is set to be $\sigma\sqrt{2n\log (n/\delta)}$, and then their results hold with probability $1-2\delta$. Here we use $\lambda=\sigma M_y\sqrt{n}$ (which is also in the order of $O(\sigma\sqrt{n\log n})$), then Corollary~\ref{cor:estimationerror}  implies that
\begin{equation}\label{eq:ouroptimal}
\frac{1}{n}\sum_{i=1}^n|\hat{x}_i-x_i|^2\leq O\left(\sigma^2 K\log n \frac{1}{m_H}+\sigma^2\frac{\log n}{n}\sum_{i=1}^K\log m_i\right),
\end{equation}
where $m_H$ is the harmonic mean of $m_1,\cdots,m_K$. Compared with the bound in \cite{Ortelli2018} (which improves over \cite{dalalyan2017}), which is in the order of $O(\sigma^2K\log n/m_H+\sigma^2K \log n  \log (n/K)/n )$, \eqref{eq:ouroptimal} is almost exactly the same and has a minor improvement from $K\log \frac{n}{K}$ to $\sum_{i=1}^K\log m_i$ ($\sum_{i=1}^K\log m_i\leq K\log \frac{n}{K}$ can be derived from the concavity of logarithmic function).



Compared with the result \eqref{eq:Guntuboyina2017} from \cite{Guntuboyina2017}, our estimation error at \eqref{eq:ouroptimal} is larger by a factor of $\log n$. However, we remark that the result from \cite{Guntuboyina2017} only holds for the optimal $\lambda$ and its formula  is rather complicated and difficult for practical use, and the analysis in \cite{Guntuboyina2017} also makes some assumptions on $m_k$, the length of intervals.

Theorem~\ref{thm:fused} allows us to approximately detect all change points through a simple screening of the solution to the fused lasso as follows.

\begin{cor}\label{cor:detection}
Assume $H_n$ is the smallest distances between the ``levels'' defined by $H_n=\min_{k=1,\cdots,K-1}|x_{n_k}-x_{n_{k+1}}|$ and $H_n \sqrt{W_n} >16 M_y$, then we can define $C>2$ such that $H_n \sqrt{W_n}  = 8 C M_y$. If $\lambda= (C-1) M_y\sqrt{W_n}$, then the set
\[
\hat{S}=\left\{i: |\hat{x}_{i-W_n/4C^2}-\hat{x}_{i+W_n/4C^2}|>\frac{H_n}{2}\right\},
\]
estimates the set of change points $S=\{n_2,\cdots,n_K\}$ approximately in the sense that with probability at least $1-1/t^2$,\[
d_H(\hat{S},S)\leq \frac{W_n}{2C^2} =  \frac{32M_y^2}{H_n^2}.
\]
\end{cor}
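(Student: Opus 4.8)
The plan is to feed the element-wise bound of Theorem~\ref{thm:fused} into two triangle-inequality arguments, after first calibrating the three terms in \eqref{eq:main01}. Write $w=W_n/(4C^2)$ for the half-window used in the screening, so that $2w=W_n/(2C^2)$ is exactly the claimed Hausdorff bound. The crucial first step is to verify that, under $\lambda=(C-1)M_y\sqrt{W_n}$, all three terms in \eqref{eq:main01} collapse to at most $H_n/4$ for every index $i$ with $d_i\ge w$. Substituting $\lambda$ and using the identity $M_y=H_n\sqrt{W_n}/(8C)$ that comes from $H_n\sqrt{W_n}=8CM_y$, together with $m_{k(i)}\ge W_n$, one finds $M_y/\sqrt{d_i}\le M_y/\sqrt{w}=H_n/4$, then $M_y^2/(4\lambda)=H_n/(32C(C-1))\le H_n/4$ since $C>2$, and finally $2\lambda/m_{k(i)}+2M_y/\sqrt{m_{k(i)}}\le 2\lambda/W_n+2M_y/\sqrt{W_n}=H_n/4$. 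Hence, on the event of probability $1-1/t^2$ supplied by Theorem~\ref{thm:fused}, we have $|\hat x_i-x_i|\le H_n/4$ simultaneously for all $i$ with $d_i\ge w$. This uniform reconstruction bound away from the change points is the whole engine; what remains is bookkeeping.

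Next I would establish completeness, i.e. that every true change point is detected. Fix a true change point $p=n_k$ separating two blocks whose levels differ by at least $H_n$. Since $C>2$ gives $w<W_n/16$, both test points $p-w$ and $p+w$ lie in the two adjacent blocks and satisfy $d_{p\pm w}\ge w$, so each has error at most $H_n/4$. The triangle inequality then yields $|\hat x_{p-w}-\hat x_{p+w}|\ge H_n-H_n/4-H_n/4=H_n/2$, so that $p$ is flagged by the screening and contributes $0$ to $d(\hat S,S)$.

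For soundness I would argue by contraposition: if an index $i$ is farther than $2w$ from every true change point, then the entire window $[i-w,i+w]$ lies strictly inside a single block, whence $x_{i-w}=x_{i+w}$, while $d_{i\pm w}\ge w$ forces both errors to be at most $H_n/4$. Therefore $|\hat x_{i-w}-\hat x_{i+w}|\le H_n/4+0+H_n/4=H_n/2$, which fails the strict threshold, so $i\notin\hat S$. Equivalently, every $i\in\hat S$ lies within $2w$ of some true change point, i.e. $d(S,\hat S)\le 2w$. Combining the two directions gives $d_H(\hat S,S)\le 2w=W_n/(2C^2)$, and the closing identity $W_n/(2C^2)=32M_y^2/H_n^2$ follows by substituting $C^2=H_n^2W_n/(64M_y^2)$.

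I expect the main obstacle to be the calibration rather than any single hard estimate: one must check that the three a-priori-incomparable terms of \eqref{eq:main01} are driven simultaneously to the common value $H_n/4$ by the coupled choices of $\lambda$ and $w$, and that the buffer $2w$ (rather than $w$) in the screening is precisely what keeps both test points on the same side of every change point in the soundness step. The constants are tight—both inequalities meet the thresholds $H_n/2$ and $H_n/4$ with equality in the worst case $m_{k(i)}=W_n$, $d_i=w$—so some care is needed at the interface between the non-strict error bound and the strict detection rule, and the strict hypothesis $H_n\sqrt{W_n}>16M_y$ (equivalently $C>2$) is exactly what supplies the required slack.
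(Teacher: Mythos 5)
Your proposal is correct and follows essentially the same route as the paper: the paper's (very terse) proof likewise observes that for $d_i\geq W_n/4C^2$ all terms of the element-wise bound are at most $2CM_y/\sqrt{W_n}=H_n/4$, and then concludes detection and localization by the same two triangle-inequality arguments you spell out. Your version simply fills in the calibration of the three terms and the completeness/soundness bookkeeping that the paper leaves implicit (including the strict-versus-nonstrict threshold subtlety, which the paper glosses over as well).
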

\begin{proof}
For $ W_n/4C^2\leq i \leq m- W_n/4C^2$, the upper bound in \eqref{eq:bound} is bounded above by $\frac{2CM_y}{\sqrt{W_n}}=H_n/4$. It then follows that $n_i\in \hat{S}$ and $[n_i+W_n/2C^2,n_{i+1}-W_n/2C^2]\not\in \hat{S}$, and Corollary~\ref{cor:detection} is proved.
\end{proof}
Compared with the theoretical guarantee for a similar approximate change point screening method in \cite{Lin2016}, which states that $d_H(\hat{S},S)\leq \log^2n/H_n^2$ when $K=O(1)$ and $W_n=\Theta(n)$, our result removes a factor of $\log n$ and does not make assumptions on $K$ and $W_n$. In fact, Corollary~\ref{cor:detection} matches the bound of  jump-penalized least squares estimators in \cite{10.2307/25464745}, binary segmentation (BS) and wild binary segmentation (WBS) in \cite{fryzlewicz2014}, the simultaneous multiscale change point estimator in \cite{doi:10.1111/rssb.12047}, and the tail-greedy unbiased Haar wavelets (TGUH)~\cite{fryzlewicz2018}.

\subsection{Proof of Theorem~\ref{thm:fused}}\label{sec:proof_fused}
We first present the main intermediate result that will be used to prove Theorem~\ref{thm:fused}:
\begin{thm}\label{thm:main}
Given $y_1,\cdots,y_m$ and $a,b\in\reals$, consider the problem:
\begin{equation}\label{eq:optimization1}
\{\hat{x}_i\}_{i=1}^m=\argmin_{x_1,\cdots,x_m\in\reals }G(x_1,\cdots,x_m),
\end{equation}
where
\[
 G(x_1,\cdots,x_m)=\sum_{i=1}^m(x_i-y_i)^2+\lambda\left(|x_1-a|+|x_m-b|+ \sum_{i=1}^{m-1}|x_i-x_{i+1}|\right).
\]
If for all $1\leq k\leq l\leq m$, we have
\begin{equation}\label{eq:main_assumption0}
\left|\sum_{i=k}^l y_i\right|\leq M_y \sqrt{l-k+1},
\end{equation}
then
\begin{equation}\label{eq:bound}
|\hat{x}_i|\leq \max\left(\frac{M_y}{\sqrt{i}},\frac{M_y}{\sqrt{m+1-i}},\frac{M_y^2}{4\lambda},\frac{2\lambda}{m}+\frac{2M_y}{\sqrt{m}}\right).
\end{equation}

\end{thm}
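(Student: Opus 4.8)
The plan is to use the first-order optimality conditions of the strictly convex objective $G$ and then apply them to a cleverly chosen interval rather than to the constant block through $i$. Since the squared-error term makes $G$ strictly convex, there is a unique minimizer $\hat x$. I introduce subgradient variables $s_i = \sign(\hat x_i - \hat x_{i+1}) \in [-1,1]$ for $1 \le i \le m-1$, together with the boundary terms $s_0 := -\sign(\hat x_1 - a)$ and $s_m := \sign(\hat x_m - b)$, each in $[-1,1]$. Stationarity $0 \in \partial G(\hat x)$ then reads $2(\hat x_i - y_i) = \lambda(s_{i-1} - s_i)$ for every $1 \le i \le m$. Summing over any block $k \le j \le l$ telescopes to the single identity $\sum_{j=k}^l (\hat x_j - y_j) = \tfrac{\lambda}{2}(s_{k-1} - s_l)$. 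This telescoping identity, combined with the hypothesis \eqref{eq:main_assumption0}, is the only analytic input I will need.

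By symmetry it suffices to bound $\hat x_i$ from above: replacing $y,a,b$ by their negatives leaves \eqref{eq:main_assumption0} unchanged and sends $\hat x$ to $-\hat x$, so an upper bound on every coordinate gives the matching lower bound and hence $|\hat x_i|$. Thus assume $c := \hat x_i > 0$. The key idea is to apply the telescoping identity to the connected component $[k,l] \ni i$ of the super-level set $\{\, j : \hat x_j \ge c\,\}$. This is exactly what makes the signs work out: if $k > 1$ then $\hat x_{k-1} < c \le \hat x_k$ forces $s_{k-1} = -1$, and if $l < m$ then $\hat x_l \ge c > \hat x_{l+1}$ forces $s_l = +1$. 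Writing $N = l-k+1$ and using $\sum_{j=k}^l \hat x_j \ge Nc$ together with the identity and \eqref{eq:main_assumption0} produces a bound on $c$ depending only on $N$ and on which ends the component meets.

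I then split into cases according to whether $[k,l]$ touches the boundary. If it reaches the left end ($k=1$), then $s_0 \le 1$ kills the $\lambda$-term while $N = l \ge i$ gives $c \le M_y/\sqrt{N} \le M_y/\sqrt{i}$; the right-end case ($l=m$) is symmetric and yields $M_y/\sqrt{m+1-i}$ since there $N \ge m+1-i$; and if $[k,l]$ is all of $[1,m]$ then $N=m$ and $\tfrac{\lambda}{2}(s_0 - s_m) \le \lambda$ give $c \le M_y/\sqrt m + \lambda/m$, dominated by $2\lambda/m + 2M_y/\sqrt m$. These three cases yield the first, second, and fourth terms of \eqref{eq:bound}.

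The main obstacle is the remaining interior case, $1 < k$ and $l < m$, where no boundary helps and $N$ is completely uncontrolled. Here the favourable signs $s_{k-1} = -1$, $s_l = +1$ give $\sum_{j=k}^l(\hat x_j - y_j) = -\lambda$, whence $Nc \le M_y\sqrt N - \lambda$ and $c \le g(N) := M_y/\sqrt N - \lambda/N$. Since $N$ cannot be pinned down, the point is that it need not be: $g$ attains its maximum over $N>0$ at $N = 4\lambda^2/M_y^2$, where $g(N) = M_y^2/(4\lambda)$, so $c \le M_y^2/(4\lambda)$, the third term of \eqref{eq:bound}. Recognizing that the interior case should be closed by optimizing the crude bound over the unknown block length $N$ — rather than by estimating $N$ itself — is the crux of the argument.
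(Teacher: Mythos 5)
Your proof is correct, and it takes a genuinely different route from the paper's. The paper never writes down the KKT system explicitly; instead it works variationally, constructing competitor sequences $\tilde{x}$ (shifting a constant block by $\epsilon$) and extracting inequalities from $G(\hat{x})\leq G(\tilde{x})$. This forces it to first establish structural facts about $\hat{x}$ — Lemma~3.5 rules out plateaux above $M_y^2/4\lambda$ that are local maxima, whence the set $\{i:|\hat{x}_i|\leq M_y^2/4\lambda\}$ is an interval and $\hat{x}$ is monotone outside it — and then to run a three-way case analysis (with a further sub-split on the plateau location for the $2\lambda/m+2M_y/\sqrt{m}$ term). Your telescoped stationarity identity $\sum_{j=k}^{l}(\hat{x}_j-y_j)=\tfrac{\lambda}{2}(s_{k-1}-s_l)$, applied to the connected component of the super-level set $\{j:\hat{x}_j\geq \hat{x}_i\}$, collapses all of this: the component automatically pins $s_{k-1}=-1$ and $s_l=+1$ at any interior endpoint, the case analysis reduces to which boundaries the component touches, and the maximization of $M_y/\sqrt{N}-\lambda/N$ over the unknown length $N$ is exactly the computation the paper performs at the end of its Lemma~3.5, just deployed once at the right level of generality. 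Your argument is shorter, needs no monotonicity or plateau lemmas, and even sharpens the last term to $\lambda/m+M_y/\sqrt{m}$. What the paper's heavier route buys is the by-product structure it uses later (the monotone ``isotonic regression regions'' discussed in Section~3.4) and a template that transfers to the group fused lasso in Section~4, where the sign variables become unit vectors and the super-level-set device is no longer available.
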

The following lemma establishes a property that happens with high probability and we will prove Theorem~\ref{thm:fused} based on the assumption that this property holds.
\begin{lemma}\label{lemma:prob}
With probability $1-1/t^2$, for $M_y=2\sigma\sqrt{\log n+\log t}$ and all $1\leq k\leq l\leq n$, we have
\begin{equation}\label{eq:prob_requirement}
\left|\sum_{i=k}^l\epsilon_i\right|\leq M_y \sqrt{l-k+1}.
\end{equation}
\end{lemma}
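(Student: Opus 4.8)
The plan is to prove Lemma~\ref{lemma:prob} by a union bound over all $\tfrac{n(n+1)}{2}$ contiguous blocks $\{k,k+1,\dots,l\}$, controlling each block with a Gaussian tail estimate. The only probabilistic input is that each partial sum of the noise is Gaussian with a known variance; the specific value of $M_y$ is then reverse-engineered so that the per-block failure probability is exactly $1/(n^2t^2)$, small enough to survive the union bound.

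First I would fix a pair $1 \le k \le l \le n$ and set $S_{k,l}=\sum_{i=k}^l \epsilon_i$. Since the $\epsilon_i$ are i.i.d.\ $N(0,\sigma^2)$, the sum $S_{k,l}$ is centered Gaussian with variance $(l-k+1)\sigma^2$, so $Z\defeq S_{k,l}/(\sigma\sqrt{l-k+1})$ is standard normal and
\[
\Prob{|S_{k,l}| > M_y\sqrt{l-k+1}} = \Prob{|Z| > M_y/\sigma}.
\]
Next I would invoke the two-sided Gaussian tail bound $\Prob{|Z|>z}\le e^{-z^2/2}$, valid for all $z\ge 0$. With $M_y = 2\sigma\sqrt{\log n + \log t}$ we have $z = M_y/\sigma = 2\sqrt{\log n + \log t}$, hence $z^2/2 = 2(\log n + \log t) = \log(n^2 t^2)$, so each block violates~\eqref{eq:prob_requirement} with probability at most $e^{-\log(n^2 t^2)} = 1/(n^2 t^2)$.

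Finally I would take a union bound over the $\tfrac{n(n+1)}{2}$ choices of $(k,l)$ with $1\le k\le l\le n$. Using $\tfrac{n(n+1)}{2}\le n^2$, the total failure probability is at most
\[
\frac{n(n+1)}{2}\cdot\frac{1}{n^2 t^2} = \frac{n+1}{2n}\cdot\frac{1}{t^2} \le \frac{1}{t^2},
\]
since $n+1\le 2n$. Taking complements then yields~\eqref{eq:prob_requirement} simultaneously for all $1\le k\le l\le n$ with probability at least $1-1/t^2$, as claimed.

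As for obstacles, there is no deep difficulty; the only point demanding care is the constant in the tail bound. The crude estimate $\Prob{|Z|>z}\le 2e^{-z^2/2}$ would leave a stray factor of $2$ that the $\tfrac{n(n+1)}{2}$-fold union bound does not quite absorb, so I would instead use the sharper one-sided bound $\Prob{Z>z}\le\tfrac12 e^{-z^2/2}$ (verified by noting that $\tfrac12 e^{-z^2/2}-\Prob{Z>z}$ vanishes at $z=0$ and as $z\to\infty$ while having a single interior critical point, hence stays nonnegative). This sharper constant is precisely what makes the chosen $M_y$ tight enough for the bound to close.
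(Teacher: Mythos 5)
Your proof is correct and follows essentially the same route as the paper: normalize each partial sum to a standard Gaussian, apply the tail bound $\Prob{N(0,1)>z}\le\tfrac12 e^{-z^2/2}$, and take a union bound over the at most $n^2$ pairs $(k,l)$, which with $M_y=2\sigma\sqrt{\log n+\log t}$ yields total failure probability at most $1/t^2$. Your extra care with the constant in the tail bound and the count $n(n+1)/2\le n^2$ only tightens what the paper already does.
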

\begin{proof}[Proof of Lemma~\ref{lemma:prob}]
Use the common tail bound of the Gaussian distribution that $\Pr(N(0,1)>t)<\frac{1}{2}e^{-t^2/2}$ and $\sum_{i=k}^l \epsilon_i\sim N(0,(l-k+1)\sigma^2)$, we have that for any fixed $k,l$,
\[
\Pr\left(\left|\sum_{i=k}^l \epsilon_i\right|\leq M_y \sqrt{l-k+1}\right)\geq 1-\exp(-M_y^2/2).
\]
For all pairs of $1\leq k\leq l\leq n$, a union bound show that it holds with probability at least $1-n^2\exp(-M_y^2/2)=1-\exp(2\log n-M_y^2/2)=1-1/t^2$.
\end{proof}
\begin{proof}[Proof of Theorem~\ref{thm:fused}]
By the structure of the optimization problem \eqref{eq:optimization}, we have the following property
\begin{equation}\label{eq:optimization0}
\{\hat{x}_i\}_{i=n_k}^{i=n_{k+1}-1}=\argmin_{\{x_i\}_{i=n_k}^{n_{k+1}-1}} \lambda(|x_{n_k}-\hat{x}_{n_k-1}|+|x_{n_{k+1}-1}-\hat{x}_{n_{k+1}}|)+\sum_{i=n_k}^{n_{k+1}-1} \left(|y_i-x_i|^2+\lambda|x_i-x_{i+1}|\right).
\end{equation}
That is, to find $\hat{\bx}$ for any interval in $[n_k,n_{k+1}-1]$, we can investigate this subproblem that focuses on this interval, assuming that the correct estimations at the endpoints have been given. Because of this property, we proposed to study the optimization problem for each interval of constant values, i.e., $n_k\leq i\leq n_{k+1}-1$ with other estimations fixed. WLOG we may assume that $x_{n_k}=\cdots=x_{n_{k+1}-1}=0$ and $m=n_{k+1}-n_k$,  then the estimation problem \eqref{eq:optimization0} is reduced to the problem in the form of  \eqref{eq:optimization1}.

Applying Lemma~\ref{lemma:prob}, we know that the assumption \eqref{eq:main_assumption0} holds when we use Theorem~\ref{thm:main} to investigate \eqref{eq:optimization0} for  $k=1,\cdots,K$. Then the estimation error $|\hat{x}_i-x_i|$ of fused lasso is given by \eqref{eq:bound}, which then proves Theorem~\ref{thm:fused}.

We remark that for $k=1$ (and similarly for $k=K$), $\hat{x}_{n_1-1}$ does not exist and \eqref{eq:optimization0} would be reduced to
\[
\{\hat{x}_i\}_{i=n_1}^{i=n_{2}-1}=\argmin_{\{x_i\}_{i=n_1}^{n_{2}-1}} \lambda |x_{n_{2}-1}-\hat{x}_{n_{2}}|+\sum_{i=n_1}^{n_{2}-1} \left(|y_i-x_i|^2+\lambda|x_i-x_{i+1}|\right).
\]
While this is slightly different from the problem in Theorem~\ref{thm:main} (with one term in $G$ missing), using the same argument we would still reach the same bound \eqref{eq:bound} in Theorem~\ref{thm:main}.
\end{proof}
\subsection{Proof of Theorem~\ref{thm:main}}\label{sec:proof_main}
For the simplicity of notations in indices, we denote $a$ and $b$ by $\hat{x}_{0}$ and $\hat{x}_{m+1}$.

\begin{lemma}\label{lemma:localmax}
The following  two situations would not happen for any $1\leq m_1\leq m_2\leq m$:
\begin{align}\label{eq:localmax1}
\hat{x}_{m_1}=\hat{x}_{m_1+1}=\cdots=\hat{x}_{m_2}>\frac{M_y^2}{4\lambda},\,\,\,\hat{x}_{m_2}>\hat{x}_{m_2+1},\,\,\,\hat{x}_{m_1}>\hat{x}_{m_1-1}\\\label{eq:localmax2}
\hat{x}_{m_1}=\hat{x}_{m_1+1}=\cdots=\hat{x}_{m_2}<-\frac{M_y^2}{4\lambda},\,\,\,\hat{x}_{m_2}<\hat{x}_{m_2+1},\,\,\,\hat{x}_{m_1}<\hat{x}_{m_1-1}
\end{align}
Intuitively, this result implies that the sequence $\hat{x}_0,\hat{x}_1,\cdots,\hat{x}_m,\hat{x}_{m+1}$ does not have a local minimum smaller than $\frac{M_y^2}{4\lambda}$, or a local maximum larger than $\frac{M_y^2}{4\lambda}$.
\end{lemma}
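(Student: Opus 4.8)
The plan is to argue by contradiction using the optimality of $\hat{\bx}$ for the strictly convex objective $G$: I will assume one of the forbidden configurations occurs and exhibit an explicit small perturbation of $\hat{\bx}$ that strictly decreases $G$. By the sign symmetry $x \mapsto -x$ (which maps \eqref{eq:localmax1} to \eqref{eq:localmax2} after replacing $y_i, a, b$ by their negatives, with the same bound \eqref{eq:main_assumption0}), it suffices to rule out \eqref{eq:localmax1}, so I focus on a plateau $c := \hat{x}_{m_1} = \cdots = \hat{x}_{m_2} > M_y^2/(4\lambda)$ that strictly exceeds both of its immediate neighbors $\hat{x}_{m_1-1}$ and $\hat{x}_{m_2+1}$. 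Throughout I use the convention $\hat{x}_0 = a$, $\hat{x}_{m+1} = b$, so that the two boundary penalties $|x_1-a|$, $|x_m-b|$ are absorbed into a single generalized total-variation sum $\lambda\sum_{i=0}^{m}|\hat{x}_i - \hat{x}_{i+1}|$ and the boundary cases $m_1=1$ or $m_2=m$ need no separate treatment.

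The perturbation I would use is to lower the whole plateau: replace $\hat{x}_i$ by $\hat{x}_i - \delta$ for $m_1 \le i \le m_2$ and keep all other coordinates fixed, with $\delta>0$ small. Here the strict inequalities in the hypothesis do the essential work: because $c > \hat{x}_{m_1-1}$ and $c > \hat{x}_{m_2+1}$, for all sufficiently small $\delta$ both affected differences $|\hat{x}_{m_1-1}-\hat{x}_{m_1}|$ and $|\hat{x}_{m_2}-\hat{x}_{m_2+1}|$ stay on the same (positive) branch of the absolute value, so each decreases by exactly $\delta$; all internal differences inside the plateau remain zero. Thus the total-variation part changes by $-2\lambda\delta$, while the data-fitting part changes by $\sum_{i=m_1}^{m_2}\big[(c-\delta-y_i)^2-(c-y_i)^2\big] = -2\delta(cL - S) + L\delta^2$, where $L = m_2-m_1+1$ and $S=\sum_{i=m_1}^{m_2}y_i$. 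Combining, the net change is $\Delta G = -2\delta\,(cL - S + \lambda) + L\delta^2$.

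For small $\delta$ the linear term dominates, so $\Delta G<0$ (contradicting optimality) as soon as $cL - S + \lambda > 0$. This is the crux, and the only real content of the lemma: I would bound $S \le |S| \le M_y\sqrt{L}$ using assumption \eqref{eq:main_assumption0}, then invoke the hypothesis $c > M_y^2/(4\lambda)$ together with the arithmetic–geometric mean inequality,
\[
cL + \lambda > \frac{M_y^2 L}{4\lambda} + \lambda \ \ge\ 2\sqrt{\frac{M_y^2 L}{4\lambda}\cdot\lambda} = M_y\sqrt{L} \ \ge\ S,
\]
which yields $cL - S + \lambda > 0$ exactly. I expect this AM–GM step to be the one point requiring care, since it is precisely what calibrates the threshold $M_y^2/(4\lambda)$ appearing in the statement to the data bound $M_y\sqrt{L}$; the matching equality case $L = 4\lambda^2/M_y^2$ explains why the threshold is sharp. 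Everything else — choosing $\delta$ small enough to preserve the absolute-value branches and to make the quadratic term negligible, and transcribing the symmetric argument for \eqref{eq:localmax2} — is routine bookkeeping once this inequality is in hand.
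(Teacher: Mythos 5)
Your proposal is correct and follows essentially the same route as the paper: the same plateau-lowering perturbation $\hat{x}_i \mapsto \hat{x}_i - \delta$ on $[m_1,m_2]$, the same first-order computation of $\Delta G$, and the same calibration of the threshold $\frac{M_y^2}{4\lambda}$ (your AM--GM step is just a rephrasing of the paper's maximization of $\frac{M_y\sqrt{L}-\lambda}{L}$ over $L$). No substantive differences to report.
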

\begin{proof}
We start with the situation as described in \eqref{eq:localmax1}. The proof is based on the construction of $\tilde{x}_i, i=1,\cdots,m$ as follows: $\tilde{x}_i=\hat{x}_i-\epsilon$ for $i\in [m_1,m_2]$, and $\tilde{x}_i=\hat{x}_i$ for $i\not\in[m_1,m_2]$. Then we have that
\[
\sum_{i=0}^{m}|\hat{x}_i-\hat{x}_{i+1}|-\sum_{i=0}^{m}|\tilde{x}_i-\tilde{x}_{i+1}|=2\epsilon
\]
and
\[
\sum_{i=1}^m(\hat{x}_i-y_i)^2-\sum_{i=1}^m(\tilde{x}_i-y_i)^2=2\epsilon\sum_{i=m_1}^{m_2}(\hat{x}_{i}-y_i)+O(\epsilon^2)
\]
As a result, $G(\{\hat{x}_i\}_{i=1}^m)\leq G(\{\tilde{x}_i\}_{i=1}^m)$ implies that
\[
0\geq 2\lambda+2\sum_{i=m_1}^{m_2}(\hat{x}_{i}-y_i)=2\lambda+2\sum_{i=m_1}^{m_2}(\hat{x}_{m_1}-y_i)
\]
and
\[
\hat{x}_{m_1}\leq \frac{\sum_{i=m_1}^{m_2}y_i-\lambda}{m_2-m_1+1}\leq \frac{M_y\sqrt{m_2-m_1+1}-\lambda}{m_2-m_1+1}\leq \frac{M_y^2}{4\lambda},
\]
where the last step follows from $\argmax_{x}a/x-\lambda/x^2=a^2/4\lambda$.

The proof for the situation described in \eqref{eq:localmax2} is identical (with different signs).
\end{proof}

Let $\mathcal{I}=\{0\leq i\leq m+1: |\hat{x}_i|\leq \frac{M_y^2}{4\lambda}\}$, then by Lemma~\ref{lemma:localmax}, $\mathcal{I}$ must be an interval in the form of $\mathcal{I}=\{m_1,m_1+1,\cdots,m_2\}$ or it is an empty set.

Case 1: $\mathcal{I}$ is nonempty.

Then $\mathcal{I}=\{m_1,m_1+1,\cdots,m_2\}$, and for all $i< m_1$ and $i>m_2$, $|\hat{x}|_i>\frac{M_y^2}{4\lambda}$. In addition, Lemma~\ref{lemma:localmax} implies that $\hat{x}_0,\cdots,\hat{x}_{m_1-1}$ have the same signs, that is, they are either all positive or all negative. Similarly, $\hat{x}_{m_2+1},\cdots,\hat{x}_{m+1}$ have the same signs.

Now let us investigate the sequence $\hat{x}_0,\cdots,\hat{x}_{m_1-1}$. WLOG we may assume that they are all positive. For any $k\leq m_1-1$, we define a sequence $\tilde{x}_i$ by
\[
\tilde{x}_i=\begin{cases}\hat{x}_i-\epsilon, \text{if $i\leq k'$}\\\hat{x}_i, \text{if $i>k'$},\end{cases}
\]
where $k'$ is the smallest integer such that
\[
\hat{x}_k=\hat{x}_{k+1}=\cdots=\hat{x}_{k'}\neq \hat{x}_{k'+1}.
\]
For example, if $\hat{x}_k\neq \hat{x}_{k+1}$, then $k'=k$. Since $\hat{x}_{m_1-1}\neq \hat{x}_{m_1}$ by definition, we have $k'\leq m_1-1$. Then
\[
\sum_{i=0}^{m}|\hat{x}_i-\hat{x}_{i+1}|-\sum_{i=0}^{m}|\tilde{x}_i-\tilde{x}_{i+1}|=0,
\]
and
\[
\sum_{i=1}^m(\hat{x}_i-y_i)^2-\sum_{i=1}^m(\tilde{x}_i-y_i)^2=2\epsilon\sum_{i=1}^{k'}(\hat{x}_{i}-y_i)+O(\epsilon^2).
\]
Therefore, $G(\{\hat{x}_i\}_{i=1}^m)\leq G(\{\tilde{x}_i\}_{i=1}^m)$ implies that
\[
0\geq \sum_{i=1}^{k'}(\hat{x}_{i}-y_i)\geq k'\hat{x}_k-\sum_{i=1}^{k'}y_i\geq k'\hat{x}_k-M_y\sqrt{k'},
\]
and then we have
\[
\hat{x}_k\leq \frac{M_y}{\sqrt{k'}}\leq \frac{M_y}{\sqrt{k}}\,\,\text{for $k\leq m_1-1$}.
\]
Similarly, we have
\[
|\hat{x}_k|\leq \frac{M_y}{\sqrt{m-k+1}},\,\,\text{for all $k\geq m_2+1$}.
\]
Combining it with $|\hat{x}|_i\leq \frac{M_y^2}{4\lambda}$ for $m_1\leq i\leq m_2$, \eqref{eq:bound} is proved.

Case 2: $\mathcal{I}$ is empty. That is, for all $0\leq i\leq m+1$, we have $|\hat{x}_i|>\frac{M_y^2}{4\lambda}$.

Case 2a: Assuming that $\sign(\hat{x}_0)\neq \sign(\hat{x}_{m+1})$.

WLOG we may assume that $\hat{x}_0$ is positive while $\hat{x}_{m+1}$ is negative. Then Lemma~\ref{lemma:localmax} and $\mathcal{I}=\emptyset$ imply that there exists $0\leq m_0\leq m+1$, such that for all $i\leq m_0$, $\hat{x}_i\geq \frac{M_y^2}{4\lambda}$, for all $i\geq m_0+1$, $\hat{x}_i\leq -\frac{M_y^2}{4\lambda}$.

Following the same proof as case 1, we have
\[
|\hat{x}_k|\leq \begin{cases} \frac{M_y}{\sqrt{k}},\,\,\text{for $1\leq k\leq m_0$}\\\frac{M_y}{\sqrt{m-k+1}},\,\,\text{for $m_0+1\leq k\leq m$},\end{cases}
\]
and \eqref{eq:bound}  is proved.

Case 2b: Assuming that $\sign(\hat{x}_0) = \sign(\hat{x}_{m+1})$.

WLOG we may assume that both  $\hat{x}_0$ and $\hat{x}_{m+1}$ are positive. Then Lemma~\ref{lemma:localmax} and $\mathcal{I}=\emptyset$ imply that $\hat{x}_i> \frac{M_y^2}{4\lambda}$  for all $0\leq i\leq m+1$, and in addition, it is first nonincreasing and then nondecreasing. That is, there exists $0\leq m_0\leq m$ such that the sequence $\hat{x}_0,\cdots,\hat{x}_{m_0}$ is nonincreasing and $\hat{x}_{m_0},\cdots,\hat{x}_{m+1}$ is nondecreasing. Therefore, if we let the set
\[
\tilde{\calI}=\{0\leq i\leq m+1: \hat{x}_i=\hat{x}_{m_0}\},
\]
then this set is an interval, that is, WLOG we can write it by $\tilde{\calI}=[m_1,m_1+1,\cdots,m_2]$.

Following the proof of case 1, we have
\begin{equation}\label{eq:est1}
|\hat{x}_k|\leq \begin{cases} \frac{M_y}{\sqrt{k}},\,\,\text{for $1\leq k\leq m_1-1$}\\\frac{M_y}{\sqrt{m-k+1}},\,\,\text{for $m_2+1\leq k\leq m$}.\end{cases}
\end{equation}
Now let us consider another sequence $\tilde{x}_i$ defined by
\[
\tilde{x}_i=\begin{cases}\hat{x}_i-\epsilon, \text{if $i\in\tilde{\mathcal{I}}$}\\\hat{x}_i, \text{if $i\not\in\tilde{\mathcal{I}}$},\end{cases}
\]
then $G(\{\hat{x}_i\}_{i=1}^m)\leq G(\{\tilde{x}_i\}_{i=1}^m)$ implies that
\[
\sum_{i\in\mathcal{I}_2}(y_i-\hat{x}_{m_0})+\lambda=0,
\]
and
\begin{equation}\label{eq:est2}
\hat{x}_{m_0}=\frac{\sum_{i\in\mathcal{I}}y_i+\lambda}{|\mathcal{I}|}.
\end{equation}
Combining it with \eqref{eq:est1}, we can analyze $\hat{x}_{m_0}$ by three cases:
\begin{itemize}
\item $m_1-1\geq m/4$. Then \eqref{eq:est1} implies that $\hat{x}_{m_0}<\hat{x}_{k_1-1}\leq \frac{2M_y}{\sqrt{m}}$.
\item $m-m_2\geq m/4$. Then \eqref{eq:est1} implies that $\hat{x}_{m_0}<\hat{x}_{k_2+1}\leq \frac{2M_y}{\sqrt{m}}$.
\item $m_1-1\leq m/4$ and $m-m_2\leq m/4$. Then we have $|\tilde{I}|\geq m/2$ and \eqref{eq:est2} implies that $\hat{x}_{m_0}\leq  \frac{2\lambda}{m}+\frac{M_y}{\sqrt{m/2}}$.
\end{itemize}
In summary, we have $\hat{x}_{m_0}\leq \frac{2\lambda}{m}+\frac{2M_y}{\sqrt{m}}$. That is, $\hat{x}_{i}\leq \frac{2\lambda}{m}+\frac{2M_y}{\sqrt{m}}$ for all $m_1\leq i\leq m_2$. Combining it with \eqref{eq:est1}, \eqref{eq:bound} is proved.

\subsection{Discussion}\label{sec:dicussion}
While we have assumed that the noise $\epsilon_i$ is i.i.d. from $N(0,\sigma^2)$, from the proof we can see that the result can be easily adapted to more generic distribution $\epsilon_i\sim \mu$. Since the only probabilistic argument in this proof is Lemma~\ref{lemma:prob},  $M_y$ can be adjusted accordingly for any new distribution $\mu$,  so that  \eqref{eq:prob_requirement} is satisfied. For example,  $M_y$ can be chosen to be $O(\sqrt{\log n})$ if $\mu$ is sub-Gaussian and  $O({\log n})$ if $\mu$ is sub-exponential.

There are some additional interesting properties of $\hat{x}_i$ arising from the proof of Theorem~\ref{thm:main}. For example, if $\sign(a)\neq\sign(b)$ then case 2b would not apply, and \eqref{eq:bound} in Theorem~\ref{thm:main} can be simplified to
\[
|\hat{x}_i|\leq \max\left(\frac{M_y}{\sqrt{i}},\frac{M_y}{\sqrt{m+1-i}},\frac{M_y^2}{4\lambda}\right).
\]
Based on it, we would have a slightly stronger result on the estimation error of fused lasso, if some additional assumptions are made, for example, if $x_{n_{k}}-x_{n_{k}-1}$ for all $k=2,\cdots,K$ always have the same signs.

Another interesting property is that, as shown in the proof of Theorem~\ref{thm:main}, when $i$ is close to the change point such that $\frac{M_y}{\sqrt{i}}$ or $\frac{M_y}{\sqrt{m+1-i}}$ dominates the bound \eqref{eq:bound}, the sequence $\hat{x}_i$ would be monotone. This property suggests that at these locations, the solution of fused lasso is the same as the solution of the isotonic regression problem. Note that the component $\sigma^2\frac{\log n}{n}\sum_{i=1}^K\log m_i$  in the estimation error bound \eqref{eq:ouroptimal} corresponds to these ``isotonic regression regions'',
applying existing results on the expected error bound of the isotonic regression estimator~\cite{zhang2002,chatterjee2015}, we can remove the factor $\log n$ from the component $\sigma^2\frac{\log n}{n}\sum_{i=1}^K\log m_i$ for the expected error bound of the fused lasso.
%

\section{Group fused lasso}\label{sec:gfl}
We first present our main result for the model \eqref{eq:gfl_model} and the group fused lasso problem \eqref{eq:gfl}, which is a parallel result to Theorem~\ref{thm:fused}:
\begin{thm}\label{thm:gfl}
Assuming $M_y=\sigma\max(\sqrt{8(\log n+\log t)+p},2\sqrt{p})$ and $25^2M_y\leq \lambda<\min_{k=1,\cdots,K}(7m_k-\sqrt{m_k})M_y$, then with probability at least $1-1/t^2$, we have
\begin{align}\label{eq:main1}
\|\hat{\bx}_i-\bx_i\|\leq &\max\left(\frac{4(M_y\sqrt{m_{k(i)}/2}+\lambda)}{m_{k(i)}}, \frac{50M_y}{\sqrt{m_{k(i)}}}, \frac{25\sqrt{5}M_y}{\sqrt{d_i}}, 125\sqrt{\frac{M_y^3}{\lambda}}\right)
\end{align}
\end{thm}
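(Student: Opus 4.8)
The plan is to mirror the three-layer architecture used for Theorem~\ref{thm:fused}: a reduction to single-interval subproblems, a high-probability control of the noise partial sums, and a deterministic bound for each subproblem. First I would reduce to the true constant-value intervals exactly as in the proof of Theorem~\ref{thm:fused}: the separability of the group penalty $\sum\|\bx_i-\bx_{i+1}\|$ means that, with the neighboring estimates $\hat\bx_{n_k-1},\hat\bx_{n_{k+1}}$ frozen, the restriction of $\{\hat\bx_i\}$ to $[n_k,n_{k+1}-1]$ minimizes a subproblem of the form
\[
\sum_{i=1}^m\|\bx_i-\by_i\|^2+\lambda\Bigl(\|\bx_1-\ba\|+\|\bx_m-\bb\|+\sum_{i=1}^{m-1}\|\bx_i-\bx_{i+1}\|\Bigr),
\]
so after centering $\bx\equiv\b0$ on the interval it suffices to establish a vector analogue of Theorem~\ref{thm:main}. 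The probabilistic input replacing Lemma~\ref{lemma:prob} is the statement $\|\sum_{i=k}^l\bepsilon_i\|\le M_y\sqrt{l-k+1}$ for all $1\le k\le l\le n$. Since $\sum_{i=k}^l\bepsilon_i\sim N(\b0,(l-k+1)\sigma^2\bI_p)$, its squared norm is $(l-k+1)\sigma^2$ times a $\chi^2_p$ variable, and a standard $\chi^2_p$ tail bound (Laurent--Massart) followed by a union bound over the $\binom{n}{2}$ index pairs yields the stated $M_y=\sigma\max(\sqrt{8(\log n+\log t)+p},2\sqrt p)$; the additive $p$ inside the square root, absent from the scalar $M_y$, is the price of controlling a $p$-dimensional Gaussian.

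For the deterministic subproblem I would begin with a telescoping identity from the first-order optimality conditions. Writing subgradient stationarity $2(\hat\bx_i-\by_i)+\lambda(\bg_i^-+\bg_i^+)=\b0$, where each edge subgradient $\bg$ is a unit-ball vector (the normalized difference on active edges), and summing over a block $i=k,\dots,l$, the interior edge terms cancel pairwise, leaving only the two boundary subgradients and hence
\[
\Bigl\|\sum_{i=k}^l\hat\bx_i\Bigr\|\le\Bigl\|\sum_{i=k}^l\by_i\Bigr\|+\lambda\le M_y\sqrt{l-k+1}+\lambda .
\]
Averaging over the whole interval gives, up to the universal constants, the first term $4(M_y\sqrt{m_{k(i)}/2}+\lambda)/m_{k(i)}$: it controls $\hat\bx_i$ whenever the solution is nearly flat across the interval. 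The remaining task is to pass from this averaged control to a pointwise bound, which requires ruling out large excursions of $\|\hat\bx_i\|$ away from the block average, especially near the endpoints.

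The heart of the argument is the vector analogue of Lemma~\ref{lemma:localmax}. As in the scalar case I would perturb a maximal norm-plateau $\hat\bx_{m_1}=\cdots=\hat\bx_{m_2}=\bv$ toward the origin along $\bu=\bv/\|\bv\|$, setting $\tilde\bx_i=\hat\bx_i-\epsilon\bu$ on $[m_1,m_2]$, and read off a bound from $G(\hat\bx)\le G(\tilde\bx)$; the boundary decay $\|\hat\bx_k\|\lesssim M_y/\sqrt k$, giving the term $25\sqrt5\,M_y/\sqrt{d_i}$, would then follow by shrinking an initial segment as in Case~1 of Theorem~\ref{thm:main}, now using $\|\sum_{i=1}^{k'}\by_i\|\le M_y\sqrt{k'}$. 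The decisive difference from the scalar proof, and what I expect to be the main obstacle, is that the two boundary penalty terms no longer each decrease by $\epsilon$: the first-order change at the left boundary is $-\epsilon\langle\bu,(\bv-\hat\bx_{m_1-1})/\|\bv-\hat\bx_{m_1-1}\|\rangle$, a cosine that may be far from $1$. In one dimension the solution is provably monotone near each endpoint and flat in the middle, so the sign structure forces the clean $2\lambda\epsilon$ gain and a single perturbation direction suffices; in $\reals^p$ neither monotonicity nor a canonical shrink direction is available, the gain degrades to $2\lambda\epsilon$ times adversarial cosines, and the analysis must absorb this loss. I anticipate that the scalar optimization $\max_L(M_y L^{-1/2}-\lambda L^{-1})=M_y^2/(4\lambda)$ is thereby replaced by one whose optimum scales like $\sqrt{M_y^3/\lambda}$, producing the fourth term $125\sqrt{M_y^3/\lambda}$, while the delicate (but routine) bookkeeping needed to keep every case consistent under the standing assumptions $25^2M_y\le\lambda<\min_k(7m_k-\sqrt{m_k})M_y$ is exactly what inflates the remaining constants relative to Theorem~\ref{thm:main}.
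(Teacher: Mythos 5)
Your outer architecture matches the paper exactly: the reduction to single-interval subproblems via separability of the penalty, and the probabilistic input $\|\sum_{i=k}^l\bepsilon_i\|\le M_y\sqrt{l-k+1}$ via a $\chi^2_p$ tail bound plus a union bound (this is Lemma~\ref{lemma:prob1}), are both right. Your block optimality identity is also correct and is precisely Lemma~\ref{lemma:deri}: summing the stationarity conditions over a block leaves only the two boundary subgradients, giving $\|\sum_{i=k}^l(\hat\bx_i-\by_i)\|\le\lambda$ and hence the averaged bound that yields the first term of \eqref{eq:main1} \emph{once one knows the solution is nearly constant on the middle of the interval}. The gap is that you have no viable mechanism for establishing that near-constancy, and the one you propose --- a direct vector analogue of Lemma~\ref{lemma:localmax} obtained by shrinking a ``maximal norm-plateau'' along $\bv/\|\bv\|$ --- does not survive the passage to $\reals^p$. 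The scalar proof of Theorem~\ref{thm:main} depends on the sublevel set $\{i:|\hat x_i|\le M_y^2/4\lambda\}$ being an interval and on the solution being monotone on either side of it; in $\reals^p$ consecutive iterates achieving the maximal norm need not be equal as vectors, the set $\{i:\|\hat\bx_i\|\le c\}$ need not be an interval, and there is no order structure to run the case analysis on. You correctly diagnose the adversarial-cosine problem at the boundary penalty terms, but diagnosing it is not the same as absorbing it.

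What the paper actually does in place of your plateau lemma is a three-stage argument with no scalar counterpart: first an a priori uniform bound $\max_i\|\hat\bx_i\|\le M_d=25M_y$ (Lemma~\ref{lemma:upperbound}), proved by contradiction through a case analysis on the angles between the edge directions $\bu_i$ and the iterates --- this is where the hypothesis $\lambda<(7m-\sqrt m)M_y$ is consumed, and it is what supplies the small parameter $M_y/M_d'$ that makes every subsequent angle estimate (e.g., \eqref{eq:bu2}, \eqref{eq:bu3}) quantitative; second, Lemma~\ref{lemma:order}, which converts directional information ($\bu_l^T\hat\bx_l$ sufficiently negative) into positional information ($l$ within $O(M_y^2/|\bu_l^T\hat\bx_l|^2)$ of an endpoint); third, Lemma~\ref{lemma:main}, which combines these to show $\|\hat\bx_i-\hat\bx_l\|\le\|\hat\bx_l\|/2$ throughout the middle of the interval, after which your averaging step closes the proof. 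None of these three steps appears in your proposal, and your account of the fourth term is an analogy rather than a derivation: $125\sqrt{M_y^3/\lambda}$ does not come from re-optimizing $M_yL^{-1/2}-\lambda L^{-1}$ with degraded cosines, but from the threshold $\sqrt{M_dM_y^2/\lambda}$ in Lemma~\ref{lemma:order} with $M_d=25M_y$, i.e., from the interaction between the a priori bound and the directional lemma. As written, the proposal establishes the probabilistic lemma and the averaged bound but leaves the pointwise bound --- the actual content of Theorem~\ref{thm:main1} --- unproved.
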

Similar to Section~\ref{sec:fused2}, we can derive the properties of the group fused lasso solution in both estimation error and approximate change point detection. Their proofs are skipped since they are straightforward generalizations of the proofs in Section~\ref{sec:fused2}.
\begin{cor}\label{cor:estimationerror1} With the assumptions in Theorem~\ref{sec:gfl}, we have the upper bound of the estimation error for the group fused lasso problem \eqref{eq:gfl} with probability $1-1/t^2$:
\[
\sum_{i=1}^n\|\hat{\bx}_i-\bx_i\|^2\leq 6250M_y^2\sum_{i=1}^K\log m_i +6000K M_y^2 +\frac{32\lambda^2K}{m_H}+\frac{125^2M_y^6n}{\lambda}.
\]
\end{cor}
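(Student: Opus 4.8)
The plan is to reproduce the proof of Corollary~\ref{cor:estimationerror} essentially verbatim, with the scalar quantity $|\hat x_i - x_i|$ replaced throughout by the norm $\|\hat\bx_i-\bx_i\|$. The point is that Theorem~\ref{thm:gfl} has already collapsed all multivariate structure into a pointwise scalar bound on $\|\hat\bx_i-\bx_i\|$, so the sum of squared errors is handled by exactly the same squaring-and-summing manipulation used for the fused lasso, and the probability $1-1/t^2$ is inherited directly from Theorem~\ref{thm:gfl}.

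First I would fix the $k$-th interval, let $i$ run over $n_k\le i\le n_{k+1}-1$, and reindex it as $1,\dots,m_k$ so that $d_i=\min(i,\,m_k+1-i)$, as in \eqref{eq:est_main}. On this interval \eqref{eq:main1} bounds $\|\hat\bx_i-\bx_i\|$ by a maximum of four terms, and since the square of a maximum is at most the sum of the squares, I would bound $\|\hat\bx_i-\bx_i\|^2$ by the sum of the four squared terms and then sum over $i=1,\dots,m_k$. Summing each contribution separately: the term $25\sqrt5\,M_y/\sqrt{d_i}$ squares to $3125M_y^2/d_i$ and, splitting the harmonic sum across the two halves of the interval via $1+\tfrac12+\cdots+\tfrac1m\le\log m+1$, contributes $6250M_y^2(\log m_k+1)$, which is the origin of the $6250M_y^2\sum_k\log m_k$ term; the term $4(M_y\sqrt{m_k/2}+\lambda)/m_k$, after applying $(a+b)^2\le 2a^2+2b^2$, contributes $16M_y^2+32\lambda^2/m_k$; the term $50M_y/\sqrt{m_k}$ contributes $2500M_y^2$; and the flat term $125\sqrt{M_y^3/\lambda}$, being constant over the interval, contributes $m_k$ copies of its square.

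Summing these over $k=1,\dots,K$ and using $\sum_{k}m_k=n$ together with $\sum_k m_k^{-1}=K/m_H$ for the harmonic mean $m_H$ then assembles the four terms of the stated bound: the logarithmic contributions give $6250M_y^2\sum_k\log m_k$, the $\lambda^2/m_k$ contributions give $32\lambda^2K/m_H$, the flat term gives the $n/\lambda$ contribution, and the remaining $M_y^2$ constants aggregate into the $O(KM_y^2)$ term. As in the remark following the proof of Theorem~\ref{thm:fused}, the boundary intervals $k=1$ and $k=K$ are missing one endpoint penalty but obey the same pointwise bound, so they need no separate treatment in the summation.

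The argument is essentially bookkeeping, so I would expect the only real effort to lie in tracking the numerical constants and the powers of $M_y$ so that they reconcile with the coefficients $6250$, $6000$, $32$, and $125^2$ appearing in the statement — in particular, correctly splitting the $1/d_i$ sum into the two sides of each interval to recover the $\log m_k$ factor, confirming that the $\lambda^2$ piece of the first max term is what produces the harmonic-mean denominator $m_H$, and checking the exact power of $M_y$ carried by the flat fourth term through the squaring. No step presents a genuine analytic obstacle, since all the work of controlling the estimator already resides in Theorem~\ref{thm:gfl}.
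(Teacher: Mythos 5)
Your proposal is exactly the argument the paper intends: the paper itself omits the proof of Corollary~\ref{cor:estimationerror1}, stating only that it is a straightforward generalization of the proof of Corollary~\ref{cor:estimationerror}, and your interval-by-interval squaring and summing of the four terms in \eqref{eq:main1} (harmonic sum for the $1/d_i$ term, $(a+b)^2\le 2a^2+2b^2$ for the first term, harmonic mean for the $\lambda^2/m_k$ contributions) is precisely that generalization. The only caveat is that the bookkeeping you describe does not exactly reproduce the stated constants --- in particular the flat term squares to $125^2M_y^3/\lambda$ rather than $125^2M_y^6/\lambda$, and the aggregated $KM_y^2$ coefficient comes out somewhat larger than $6000$ --- but these appear to be slips in the paper's stated bound rather than defects in your approach.
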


\begin{cor}\label{cor:detection1}
Assume $H_n$ is the smallest distances between the ``levels'' defined by\\ $H_n=\min_{k=1,\cdots,K-1}\|\bx_{n_k}-\bx_{n_{k+1}}\|$ and recall that $W_n=\min_{1\leq k\leq K}m_k$.  When $H_n=96CM_y/W_n^{2/3}$ with $C>1$, the following procedure can be used to detect the change points: $\lambda=(6C-2)M_yW_n^{2/3}$, then for any $n_i+\frac{5}{24C}W_n^{1/3}\leq i\leq n_{i+1}-\frac{5}{24C}W_n^{1/3}$, we have that with probability at least $1-1/t^2$,
\[
\|\hat{\bx}_i-\bx_i\|\leq H_n/4.
\]
As a result, the following detection scheme
\[
\hat{S}=\{1\leq i\leq n: \|\hat{\bx}_{i-\frac{5}{24C}W_n^{1/3}}-\hat{\bx}_{i+\frac{5}{24C}W_n^{1/3}}\|<H_n/2\}
\]
would have the detection error of
\[
d_H(\hat{S},S)\leq \frac{5}{12C}W_n^{1/3}.
\]
\end{cor}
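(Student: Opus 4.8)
The plan is to follow the template of the univariate Corollary~\ref{cor:detection} almost verbatim, replacing the scalar pointwise bound \eqref{eq:bound} by the vector bound \eqref{eq:main1} of Theorem~\ref{thm:gfl}. All randomness is already absorbed into Theorem~\ref{thm:gfl}: I would work on the single event of probability $1-1/t^2$ on which \eqref{eq:main1} holds for every $i$, so no new probabilistic estimate is needed, which is exactly where the stated probability comes from. Write $r=\frac{5}{24C}W_n^{1/3}$ for the half-window radius used by the detector. The one intermediate fact that does all the work is a uniform interior bound: whenever $d_i\ge r$ (equivalently $n_{k(i)}+r\le i\le n_{k(i)+1}-r$) one has $\|\hat{\bx}_i-\bx_i\|\le H_n/4$. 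This is the group analogue of the single sentence ``the upper bound in \eqref{eq:bound} is bounded above by $H_n/4$'' that drives Corollary~\ref{cor:detection}.

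Granting the interior bound, the detection guarantee is two applications of the triangle inequality. First, no true change point is missed: at $i=n_k$ the window endpoints $n_k-r$ and $n_k+r$ lie in the two neighbouring constant pieces (here $2r<W_n\le\min(m_{k-1},m_k)$ for $n$ large), so $\|\bx_{n_k-r}-\bx_{n_k+r}\|\ge H_n$ while each endpoint is at distance $\ge r$ from its nearest change point; hence $\|\hat{\bx}_{n_k-r}-\hat{\bx}_{n_k+r}\|\ge H_n-\tfrac{H_n}{4}-\tfrac{H_n}{4}=\tfrac{H_n}{2}$ and $n_k$ is flagged. Second, no index deep inside a piece is flagged: if $d_i\ge 2r$ then both endpoints $i\pm r$ still satisfy $d_{i\pm r}\ge r$ and lie in the same piece as $i$, so $\bx_{i-r}=\bx_{i+r}$ and $\|\hat{\bx}_{i-r}-\hat{\bx}_{i+r}\|\le \tfrac{H_n}{4}+\tfrac{H_n}{4}=\tfrac{H_n}{2}$. (The factor $2$ here is precisely why a detector of radius $r$ produces an error band of width $2r$, just as $W_n/4C^2$ produces $W_n/2C^2$ in the scalar case; I read the displayed set $\hat S$ as flagging the indices with \emph{large} local jump.) Combining the two: every element of $S$ lies in $\hat S$, so $d(S,\hat S)=0$, while every flagged index has $d_i<2r$, so $d(\hat S,S)\le 2r$, and therefore $d_H(\hat S,S)\le 2r=\frac{5}{12C}W_n^{1/3}$.

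The real work, and the main obstacle, is establishing the interior bound itself, i.e. checking that each of the four terms in the maximum of \eqref{eq:main1} is at most $H_n/4$ under the prescribed choices $H_n=96CM_y/W_n^{2/3}$ and $\lambda=(6C-2)M_yW_n^{2/3}$ together with $m_{k(i)}\ge W_n$ and $d_i\ge r$. The distance term $25\sqrt{5}\,M_y/\sqrt{d_i}$ is handled by $d_i\ge r$; the two interval terms $4(M_y\sqrt{m_{k(i)}/2}+\lambda)/m_{k(i)}$ and $50M_y/\sqrt{m_{k(i)}}$ by $m_{k(i)}\ge W_n$; and the term $125\sqrt{M_y^3/\lambda}$ by the choice of $\lambda$. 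The delicate point, absent in the scalar case, is that all four terms with their sizeable constants must be balanced simultaneously and the powers of $W_n$ must line up against $H_n/4=24CM_y/W_n^{2/3}$; one must also confirm that this $\lambda$ is admissible for Theorem~\ref{thm:gfl}, i.e. that $25^2M_y\le\lambda<\min_k(7m_k-\sqrt{m_k})M_y$, which is where the hypothesis $C>1$ and the relation linking $H_n$, $W_n$ and $M_y$ enter. Once this bookkeeping is carried out, the two triangle-inequality steps above are immediate.
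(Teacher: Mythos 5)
Your overall plan coincides with the paper's: the paper gives no real proof of Corollary~\ref{cor:detection1}, stating only that it is a ``straightforward generalization'' of Corollary~\ref{cor:detection} with $\frac{4(M_y\sqrt{m_{k(i)}/2}+\lambda)}{m_{k(i)}}$ as the dominant term of \eqref{eq:main1}, and your two-step structure (a uniform interior bound $\|\hat{\bx}_i-\bx_i\|\le H_n/4$ for $d_i\ge r$, followed by two triangle inequalities for the screening set) is exactly the template of the scalar proof. Your reading of $\hat S$ as flagging \emph{large} local jumps (i.e.\ that the displayed ``$<H_n/2$'' should be ``$>H_n/2$'', matching the scalar version) is the right interpretation, and the screening half of your argument is correct and in fact more carefully written than the paper's one-line version.

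The genuine gap is that you defer the ``bookkeeping'' that all four terms of \eqref{eq:main1} fall below $H_n/4$, and that bookkeeping is the entire content of the corollary --- moreover, it does not go through with the stated parameters, so you cannot simply ``carry it out.'' With $H_n=96CM_y/W_n^{2/3}$ one has $H_n/4=24CM_yW_n^{-2/3}$, while the distance term satisfies only $25\sqrt{5}M_y/\sqrt{d_i}\le 25\sqrt{24C}\,M_yW_n^{-1/6}$ under $d_i\ge\frac{5}{24C}W_n^{1/3}$, the $\lambda$-term is $125\sqrt{M_y^3/\lambda}=125M_y/\sqrt{(6C-2)}\,W_n^{-1/3}$, and the ``dominant'' term itself is of order $(24C-8)M_yW_n^{-1/3}$ when $m_{k(i)}=W_n$. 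Each of these decays strictly slower than $W_n^{-2/3}$, so for $W_n$ beyond a small constant none of them is below $H_n/4$: the exponents, not just the constants, fail to line up. Any honest completion of your proof must therefore either rescale $H_n$ (e.g.\ to order $M_yW_n^{-1/3}$, with correspondingly adjusted constants and window radius) or impose additional restrictions on $W_n$ and $C$; as written, the interior bound you rely on is not a consequence of Theorem~\ref{thm:gfl} with the parameters given in the statement. You should also note, as you do in passing, that the admissibility condition $25^2M_y\le\lambda<\min_k(7m_k-\sqrt{m_k})M_y$ must be checked for $\lambda=(6C-2)M_yW_n^{2/3}$; the upper inequality forces roughly $W_n^{2/3}\lesssim 7W_n/(6C-2)$, which is a further constraint the statement does not acknowledge.
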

The proof of Corollary~\ref{cor:detection1} is similar to the proof of Corollary~\ref{cor:detection}, by noting that $\frac{4(M_y\sqrt{m_{k(i)}/2}+\lambda)}{m_{k(i)}}$ is the dominant term in the RHS of \eqref{eq:main1}.

We remark that the result here is a little bit worse than the corresponding result in Section~\ref{sec:fused}. In particular, compare the upper bounds in Theorem~\ref{thm:gfl} and  Theorem~\ref{thm:fused}, the main difference is that \eqref{eq:main1} has a term in the order of $O(\sqrt{1/\lambda})$ while \eqref{eq:main01} has a corresponding term in the order of $O({1/\lambda})$. This means that the results in Theorem~\ref{thm:gfl} and is a little bit worse than the result for fused lasso in  Section~\ref{sec:fused}, and the main reason is that our analysis is based on the ``direction'' of $\hat{x}_i-\hat{x}_{i+1}$ for $1\leq i\leq n$. If $x_i$ is a scalar, then this direction is either $1$, $0$, or $-1$. But if $\bx_i$ is a vector, then this direction could be any unit vector and have infinite possibilities. This makes the group fused lasso more complicated and difficult to analyze.  Based on Theorem~\ref{thm:gfl}, Corollaries~\ref{cor:estimationerror1} and~\ref{cor:detection1} are also slightly weaker than Corollaries~\ref{cor:estimationerror} and~\ref{cor:detection} respectively. For example, Corollary~\ref{cor:detection1} requires $H_nW_n^{2/3}>O(\sqrt{\log n})$ while Corollary~\ref{cor:detection} requires $H_nW_n^{1/2}>O(\sqrt{\log n})$.

However, we expect that this is just an artifact due to the technique of our proof in this section, and simulations seem to suggest the same order of $1/\lambda$ as in \eqref{eq:bound}. We leave this possible improvement to future works.

\subsection{Proof of Theorem~\ref{thm:gfl}}
Similar to the proof of Theorem~\ref{thm:fused}, its proof is based on the property of the solution to
\begin{align}\label{eq:optimization1_group}
&\{\hat{\bx}_i\}_{i=1}^m=\argmin_{\{\bx_i\}_{i=1}^m\subset\reals^p}G(\bx_1,\cdots,\bx_m),\,\,\text{for}\\
&G(\bx_1,\cdots,\bx_m)=\sum_{i=1}^m\|\bx_i-\by_i\|^2+\lambda\left(\|\ba-\bx_1\|+\|\bx_m-\bb\|+\sum_{i=1}^{m-1}\|\bx_i-\bx_{i+1}\|\right).
\end{align}
\begin{thm}\label{thm:main1}
If
\begin{equation}\label{eq:main_assumption}
\left\|\sum_{i=k}^l \by_i\right\|\leq M_y \sqrt{l-k+1}
\end{equation}
 for all $1\leq k\leq l\leq m$, and $25^2M_y\leq \lambda<(7m-\sqrt{m})M_y$, then the solution defined in \eqref{eq:optimization1_group} satisfies
\begin{align}\label{eq:bound1}
\|\hat{\bx}_i\|\leq &\max\left(\frac{4(M_y\sqrt{m/2}+\lambda)}{m}, \frac{50M_y}{\sqrt{m}}, \frac{25\sqrt{5}M_y}{\sqrt{i}}, \frac{25\sqrt{5}M_y}{\sqrt{m-i}}, 125\sqrt{\frac{M_y^3}{\lambda}}\right).
\end{align}
\end{thm}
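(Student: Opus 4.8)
The plan is to transcribe the proof of Theorem~\ref{thm:main} into the vector setting, replacing every sign-based statement about $|\cdot|$ with a direction-based statement about $\|\cdot\|$, and then to pay the unavoidable price that the subdifferential of $\|\cdot\|$ on $\reals^p$ is the whole unit ball rather than the two-point set $\{-1,+1\}$. As in the scalar case I would extend the indexing by $\hat\bx_0=\ba$ and $\hat\bx_{m+1}=\bb$, and the only inputs I would use are the variational inequality $G(\hat\bx_1,\ldots,\hat\bx_m)\le G(\tilde\bx_1,\ldots,\tilde\bx_m)$ for competitors $\tilde\bx$ obtained by perturbing a contiguous block, together with the block-sum hypothesis \eqref{eq:main_assumption}.

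First I would establish the vector analogue of Lemma~\ref{lemma:localmax}: the sequence $\|\hat\bx_0\|,\ldots,\|\hat\bx_{m+1}\|$ has no interior local maximum exceeding the threshold $\tau:=125\sqrt{M_y^3/\lambda}$. Given a maximal block $[m_1,m_2]$ on which $\hat\bx_i\equiv\bz$ with $\|\bz\|$ a strict local maximum, I perturb the block toward the origin, $\tilde\bx_i=\hat\bx_i-\epsilon\,\bz/\|\bz\|$ for $i\in[m_1,m_2]$. Writing $\bu=\bz/\|\bz\|$ and $L=m_2-m_1+1$, the first-order change of the fidelity term is $-2\epsilon\,(L\|\bz\|-\ip{\bu}{\sum_{i=m_1}^{m_2}\by_i})$, bounded via \eqref{eq:main_assumption} by $-2\epsilon(L\|\bz\|-M_y\sqrt{L})$, while the first-order change of the penalty is $-\lambda\epsilon\,\ip{\bu}{\bu^-+\bu^+}$, where $\bu^\pm$ are the unit vectors pointing from the two neighbours into the block. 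Optimality forces the total to be nonnegative, so $\|\bz\|\le M_y/\sqrt{L}-\tfrac{\lambda}{2L}\ip{\bu}{\bu^-+\bu^+}$, which after optimizing over $L$ and inserting the lower bound on the directional factor discussed below yields $\tau$.

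Then I would partition the index set exactly as in the scalar proof. Let $\mathcal I=\{i:\|\hat\bx_i\|\le\tau\}$; the local-maximum lemma forces $\mathcal I$ to be an interval (or empty) and forces $\|\hat\bx_i\|$ to be unimodal---decreasing then increasing---outside it, which is the vector replacement for the ``constant sign'' statements in Cases~1, 2a and 2b. On each monotone tail I run the prefix/suffix shift from Case~1: shifting $\hat\bx_1,\ldots,\hat\bx_{k'}$ by a common $-\epsilon\bu$ leaves the internal penalty unchanged and gives $\|\hat\bx_k\|\le 25\sqrt5\,M_y/\sqrt{k}$ on the left tail and the symmetric $25\sqrt5\,M_y/\sqrt{m-k}$ on the right. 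The Case~2b analogue---both endpoints large and on the same side---is handled by perturbing the interior plateau $\tilde{\mathcal I}$ achieving the minimal norm; the stationarity identity there gives $\hat\bx_{m_0}=|\tilde{\mathcal I}|^{-1}(\sum_{i\in\tilde{\mathcal I}}\by_i+\tfrac{\lambda}{2}(\bu^-+\bu^+))$, whose norm is at most $(M_y\sqrt{|\tilde{\mathcal I}|}+\lambda)/|\tilde{\mathcal I}|$, producing the terms $\tfrac{4(M_y\sqrt{m/2}+\lambda)}{m}$ and $\tfrac{50M_y}{\sqrt m}$ once one splits into ``long plateau'' and ``short plateau'' subcases as in the scalar proof. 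Collecting the four regimes gives \eqref{eq:bound1}.

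The hard part is the directional factor $\ip{\bu}{\bu^-+\bu^+}$ in the local-maximum step. In the scalar proof the neighbouring difference has direction exactly $\pm1$, so this quantity is identically $2$ and one gets the clean numerator $-\lambda$, hence the threshold $M_y^2/(4\lambda)$. In $\reals^p$ the best lower bound is $\ip{\bu}{\bu^\pm}\ge\sqrt{1-\|\text{neighbour}\|^2/\|\bz\|^2}$, which degenerates to $0$ when a neighbour has norm only slightly below the peak. Controlling this forces me to look past the immediate neighbours to the first index where the norm has dropped by a fixed fraction, and to trade the length of the intervening large-norm block against the budget $M_y\sqrt{L}$; this trade-off is exactly what turns the scalar $M_y^2/\lambda$ into the weaker $\sqrt{M_y^3/\lambda}$ and is the source of the constants $25$, $50$, $25\sqrt5$, $125$ and of the hypothesis $\lambda\ge 25^2 M_y$. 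Verifying that $\mathcal I$ and the relevant super-level blocks remain genuine intervals---so that the scalar telescoping can be run verbatim---is the second place where the vector geometry must be argued rather than read off from signs.
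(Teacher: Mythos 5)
Your plan correctly isolates the central obstruction --- the subdifferential of $\|\cdot\|$ on $\reals^p$ is the whole unit ball, so the factor $\ip{\bu}{\bu^-+\bu^+}$ that equals $2$ in the scalar proof can degenerate --- and correctly guesses that this degeneration is what turns the scalar $M_y^2/\lambda$ into $\sqrt{M_y^3/\lambda}$. But the proposal defers exactly the steps that constitute the proof, and the deferred steps do not go through in the form you describe. Your cornerstone lemma (``no interior local maximum of $\|\hat\bx_i\|$ above $\tau=125\sqrt{M_y^3/\lambda}$'') is not established by the block-shrink perturbation: as you yourself note, when a neighbour has norm close to $\|\bz\|$ the directional factor collapses and your inequality degenerates to $\|\bz\|\le M_y/\sqrt{L}$, with no $\lambda$ in it. The fix you gesture at (``look past the immediate neighbours to the first index where the norm has dropped by a fixed fraction'') requires controlling how fast the directions $\bu_i$ rotate across the intervening block; the stationarity identity (Lemma~\ref{lemma:deri}) only bounds that rotation rate by roughly $(\max_i\|\hat\bx_i\|+M_y)/\lambda$ per step, so the whole scheme is unusable until one first proves an a priori global bound $\max_i\|\hat\bx_i\|\le M_d=25M_y$. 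That bound is the paper's Lemma~\ref{lemma:upperbound}; it is the longest and most delicate part of the argument (a multi-case contradiction tracking angles between successive $\bu_i$, and the place where the hypothesis $\lambda<(7m-\sqrt m)M_y$ is consumed), and your proposal has no counterpart to it. The constant $125\sqrt{M_y^3/\lambda}$ itself arises in the paper as $\frac{1}{c_1}\sqrt{M_dM_y^2/\lambda}$ with $c_1=1/25$, so it cannot even be written down before $M_d$ is available.

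The second gap is structural. The claims you import from the scalar proof --- that $\mathcal I=\{i:\|\hat\bx_i\|\le\tau\}$ is an interval, that $\|\hat\bx_i\|$ is unimodal outside it, and that the prefix/suffix telescoping then runs ``verbatim'' --- do not transfer. Even granting a no-local-max statement for the norm, the telescoping step needs $\sum_{i\le k'}\ip{\bu}{\hat\bx_i}\ge k'\,\ip{\bu}{\hat\bx_{k}}$ for a single fixed shift direction $\bu$, i.e.\ directional alignment of the entire prefix, not monotonicity of norms; a prefix whose norms decrease monotonically can still rotate so that no fixed direction sees a large projected sum. The paper therefore organizes the proof around different lemmas: it shows (Lemmas~\ref{lemma:order} and~\ref{lemma:main}) that any index $l$ with $\|\hat\bx_l\|$ exceeding the claimed bound and lying at distance of order $M_y^2/\|\hat\bx_l\|^2$ from both ends forces $\|\hat\bx_i-\hat\bx_l\|\le\|\hat\bx_l\|/2$ on the whole central range --- near-constancy of the vectors, not an exact plateau and not a statement about norms --- and only then applies the global shrink $\tilde\bx_i=\hat\bx_i-\epsilon\hat\bx_l$ to obtain the term $4(M_y\sqrt{m/2}+\lambda)/m$. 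Your Case~2b stationarity identity for an exactly constant plateau is fine as far as it goes, but producing a sufficiently long such region is precisely what fails in the vector setting and what Lemma~\ref{lemma:main} exists to replace. As written, the proposal is a programme whose two load-bearing lemmas remain unproved, and whose proof strategy for them is not the one that works.
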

Similar to Lemma~\ref{lemma:prob}, we first present a probabilistic result and then prove Theorem~\ref{thm:gfl} based on the assumption that this probabilistic event holds:
\begin{lemma}\label{lemma:prob1}
With probability $1-1/t^2$, for all $1\leq k\leq l\leq n$, we have
\[
\|\sum_{i=k}^l \bepsilon_i\|\leq M_y \sqrt{l-k+1}
\]
with $M_y=\sigma\max(\sqrt{8(\log n+\log t)+p},2\sqrt{p})$.
\end{lemma}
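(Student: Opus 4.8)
The plan is to mirror the scalar argument of Lemma~\ref{lemma:prob}, replacing the Gaussian tail bound by a tail bound for the $\chi^2$ distribution. Fix a pair $1\le k\le l\le n$ and write $N=l-k+1$. Since the $\bepsilon_i$ are i.i.d.\ $N(\b0,\sigma^2\bI_{p\times p})$, the partial sum $\sum_{i=k}^l\bepsilon_i$ is distributed as $N(\b0,N\sigma^2\bI_{p\times p})$, so $\|\sum_{i=k}^l\bepsilon_i\|^2/(N\sigma^2)$ is a $\chi^2_p$ random variable. Consequently the failure event $\|\sum_{i=k}^l\bepsilon_i\|>M_y\sqrt{N}$ is exactly the event that this $\chi^2_p$ variable exceeds $\theta:=(M_y/\sigma)^2=\max\!\bigl(8(\log n+\log t)+p,\,4p\bigr)$, a threshold that does not depend on $N$. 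It therefore suffices to bound $\Pr(\chi^2_p\ge\theta)$ for this single $\theta$ and then take a union bound over the $\binom{n}{2}+n\le n^2$ pairs.

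For the single-pair bound I would use the sharp Chernoff bound for the chi-square distribution: for any $\theta\ge p$,
\[
\Pr(\chi^2_p\ge\theta)\le\exp\!\left(-\tfrac{\theta-p}{2}+\tfrac{p}{2}\log\tfrac{\theta}{p}\right),
\]
obtained by optimizing $e^{-\lambda\theta}(1-2\lambda)^{-p/2}$ over $\lambda\in[0,\tfrac12)$ at $\lambda=\tfrac12(1-p/\theta)$. Writing $s=\log n+\log t$, the goal is to show the exponent is at least $2s$, since then each pair fails with probability at most $e^{-2s}=1/(n^2t^2)$ and the union bound over $\le n^2$ pairs yields the claimed probability $1-1/t^2$.

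The verification splits into the two regimes built into the definition of $M_y$. When $s\ge 3p/8$ the maximum is attained by $\theta=8s+p$; then $\tfrac{\theta-p}{2}=4s$ and it remains to check $4s-\tfrac{p}{2}\log(1+8s/p)\ge 2s$, i.e.\ $\tfrac{4s}{p}\ge\log(1+\tfrac{8s}{p})$, which holds once $8s/p\ge 3$ and a fortiori for all $s\ge 3p/8$. When $s< 3p/8$ the maximum is attained by $\theta=4p$; then the exponent equals $\tfrac{3p}{2}-\tfrac{p}{2}\log4>0.8\,p$, which exceeds $2s<\tfrac34 p$. Thus in both regimes the exponent dominates $2s$, completing the estimate.

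The main obstacle is calibrating the constants so that the threshold $\theta$ is simultaneously large enough to force the $\chi^2_p$ tail below $1/(n^2t^2)$ and small enough to give a useful bound $M_y\sqrt N$ on the partial sums. In particular, the cruder sub-exponential form $\Pr(\chi^2_p\ge p+2\sqrt{px}+2x)\le e^{-x}$ with $x=2s$ leaves a genuine gap in the intermediate range $s\approx 0.4p$, where neither branch of the maximum covers the threshold; this is exactly why one must use the sharper Cram\'er/Chernoff exponent above and why the definition of $M_y$ carries the separate $2\sqrt p$ branch to handle the dimension-dominated regime $s<3p/8$. Once Lemma~\ref{lemma:prob1} is in hand, Theorem~\ref{thm:gfl} follows from Theorem~\ref{thm:main1} exactly as Theorem~\ref{thm:fused} followed from Theorem~\ref{thm:main} and Lemma~\ref{lemma:prob}.
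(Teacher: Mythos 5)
Your proof is correct and follows essentially the same route as the paper: reduce each partial sum to a $\chi^2_p$ tail event via the Chernoff/Cram\'er exponent $\Pr(\chi^2_p\ge\theta)\le\exp\bigl(-\tfrac{\theta-p}{2}+\tfrac{p}{2}\log\tfrac{\theta}{p}\bigr)$ (the paper invokes the identical bound, written as $\exp(-\tfrac{p}{2}(\delta-\ln(1+\delta)))$ and cited from a concentration-of-measure reference), followed by a union bound over the at most $n^2$ pairs. If anything, your two-regime check of the constants behind the two branches of $M_y$ is more explicit than the paper's, which compresses that step into the crude inequality $\delta-\ln(1+\delta)\ge\delta/2$ for $\delta>3$.
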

\begin{proof}
Following \cite[Proposition 2.2]{measure2005}, for any $\delta>3$ and $\by\sim N(0,\bI)\in\reals^p$, we have
\[
\Pr(\|\by\|^2\geq (1+\delta)p)\leq \exp(-\frac{p}{2}(\delta-\ln(1+\delta)))\leq \exp(-\frac{p}{4}\delta)
\]
So for any fixed $1\leq k\leq l\leq n$ and $M_y\geq 2\sigma\sqrt{p}$, we have
\[
\|\sum_{i=k}^l \bepsilon_i\|\leq M_y \sqrt{p(l-k+1)}
\]
holds with probability $
1-\exp\left(-\frac{p}{4}(M_y^2-1)\right)$.
Applying a union bound, it holds for all $1\leq k\leq l\leq n$ with probability at least
\[
1-n^2\exp\left(-\frac{p}{4}(M_y^2-1)\right)\geq 1-1/t^2.
\]
\end{proof}
Similar to the proof of Theorem~\ref{thm:fused}, the combination of Theorem~\ref{thm:main1} and Lemma~\ref{lemma:prob1} implies Theorem~\ref{thm:gfl}.
\subsection{Proof of Theorem~\ref{thm:main1}}
For simplicity and consistency of the notation, we denote $\ba$ and $\bb$ by $\hat{\bx}_0$ and $\hat{\bx}_{m+1}$ respectively. For the proof, we use the notations as follows: for any $1\leq i\leq m$, let $p(i)$ and $n(i)$ be the integers between $0$ and $m+1$ such that
\[
\hat{\bx}_{p(i)-1}\neq \hat{\bx}_{p(i)}=\cdots= \hat{\bx}_{i}\cdots= \hat{\bx}_{n(i)}\neq \hat{\bx}_{n(i)+1}.
\]
We let $p(i)=0$ if $\hat{\bx}_{0}=\hat{\bx}_{1}=\cdots=\hat{\bx}_{i}$, and similarly $n(i)=m+1$ of $\hat{\bx}_i=\cdots=\hat{\bx}_{m+1}$.  As a special example, if $\hat{\bx}_{i-1}\neq \hat{\bx}_{i}\neq \hat{\bx}_{i+1}$, then $p(i)=i=n(i)$.

We let $\bu_{i}=\frac{\hat{\bx}_{i+1}-\hat{\bx}_{i}}{\|\hat{\bx}_{i+1}-\hat{\bx}_{i}\|}$ if $\hat{\bx}_{i}\neq \hat{\bx}_{i+1}$; and $\bu_{i}=\b0$ (or undefined?) otherwise.

Then we have the following lemma:
\begin{lemma}\label{lemma:deri}
For $k\leq l$, if $\hat{\bx}_{k-1}\neq \hat{\bx}_{k}$ and $\hat{\bx}_{l}\neq \hat{\bx}_{l+1}$, then
\[
2\sum_{i=k}^{l}(\hat{\bx}_i-\by_i)+\lambda(\bu_{k-1}-\bu_{l})=\b0.
\]
\end{lemma}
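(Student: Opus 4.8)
The plan is to read this identity as the first-order stationarity condition for the minimizer $\hat{\bx}$ of the convex objective $G$, extracted by perturbing a contiguous block of the solution by a single uniform vector shift. Using the convention $\ba=\hat{\bx}_0$ and $\bb=\hat{\bx}_{m+1}$, I would first rewrite the penalty compactly as $\lambda\sum_{i=0}^{m}\|\hat{\bx}_i-\hat{\bx}_{i+1}\|$, since it is this telescoping form that the construction exploits. The guiding principle is elementary: a differentiable convex function restricted to a line through its minimizer has vanishing derivative there, so the whole task is to choose a perturbation along which $G$ is genuinely differentiable and whose derivative reproduces the claimed vector.

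Concretely, for an arbitrary direction $\bv\in\reals^p$ and small $\epsilon$, I would introduce the competitor
\[
\tilde{\bx}_i=\begin{cases}\hat{\bx}_i+\epsilon\bv,&k\leq i\leq l,\\ \hat{\bx}_i,&\text{otherwise,}\end{cases}
\]
and examine $\epsilon\mapsto G(\tilde{\bx}_1,\cdots,\tilde{\bx}_m)$ near $0$. The key structural observation is that shifting the entire block $\hat{\bx}_k,\cdots,\hat{\bx}_l$ by the same vector leaves every \emph{interior} increment $\hat{\bx}_i-\hat{\bx}_{i+1}$ with $k\leq i\leq l-1$ exactly unchanged (hence constant in $\epsilon$ and contributing nothing to the derivative, regardless of whether that increment happens to vanish), so within the penalty only the two boundary increments at $i=k-1$ and $i=l$ move; the data-fitting term is affected only through the indices $k,\cdots,l$. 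Differentiating the squared-error part gives $2\sum_{i=k}^{l}\langle\hat{\bx}_i-\by_i,\bv\rangle$.

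For the penalty I would use the two nonequality hypotheses directly: $\hat{\bx}_{k-1}\neq\hat{\bx}_k$ and $\hat{\bx}_l\neq\hat{\bx}_{l+1}$ guarantee that the two affected norms are nonzero at $\epsilon=0$, hence differentiable there, with $\frac{d}{d\epsilon}\|\hat{\bx}_{k-1}-\hat{\bx}_k-\epsilon\bv\|\big|_{0}=\langle\bu_{k-1},\bv\rangle$ and $\frac{d}{d\epsilon}\|\hat{\bx}_l+\epsilon\bv-\hat{\bx}_{l+1}\|\big|_{0}=-\langle\bu_l,\bv\rangle$, recalling $\bu_i=(\hat{\bx}_{i+1}-\hat{\bx}_i)/\|\hat{\bx}_{i+1}-\hat{\bx}_i\|$. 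Collecting the pieces, $\epsilon\mapsto G(\tilde{\bx})$ is differentiable at $0$ with derivative $\langle 2\sum_{i=k}^{l}(\hat{\bx}_i-\by_i)+\lambda(\bu_{k-1}-\bu_l),\ \bv\rangle$. Optimality of $\hat{\bx}$ forces this to vanish, and since $\bv$ is arbitrary the bracketed vector is $\b0$, which is the claim.

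The one delicate point—and the step I would be most careful about—is precisely the differentiability of the penalty at $\epsilon=0$. Without the two boundary-jump hypotheses, one of the affected norms could be zero at $\epsilon=0$, so $G$ would be only one-sided differentiable along the line and I would obtain a subgradient inclusion rather than an exact equation; the hypotheses are exactly what upgrade this to a clean first-order identity. Everything else—the invariance of the interior increments and the vanishing of the directional derivative at a minimizer—is routine, so the whole argument hinges on recognizing that the uniform-shift construction isolates precisely the two boundary directions $\bu_{k-1}$ and $\bu_l$.
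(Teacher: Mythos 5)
Your argument is exactly the paper's: the paper defines $f(\bz)=G(\hat{\bx}_1,\cdots,\hat{\bx}_{k-1},\hat{\bx}_k+\bz,\cdots,\hat{\bx}_l+\bz,\hat{\bx}_{l+1},\cdots,\hat{\bx}_m)$, notes it is minimized at $\bz=\b0$, and concludes from $f'(\b0)=0$; your uniform-shift competitor in direction $\bv$ is just the directional-derivative form of the same computation. Your proposal is correct and in fact more careful than the paper, since you explicitly justify differentiability of the two boundary norm terms via the hypotheses $\hat{\bx}_{k-1}\neq\hat{\bx}_k$ and $\hat{\bx}_l\neq\hat{\bx}_{l+1}$, a point the paper leaves implicit.
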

\begin{proof}
The proof is based on the fact that for
\[
f(\bz)=G(\hat{\bx}_1,\cdot,\hat{\bx}_{k-1},\hat{\bx}_{k}+\bz,\cdots,\hat{\bx}_{l}+\bz,\hat{\bx}_{l+1},\cdots,\hat{\bx}_{m}),
\]
$f(\bz)$ is minimized at $\bz=\b0$. Then this lemma follows from  $f'(\b0)=0$.
\end{proof}
Applying that $\|\bu_{k-1}\|=\|\bu_{l}\|=1$, this lemma has two consequences that will be used later: If we use $\angle(\bu,\bv)=\cos^{-1}\frac{\bu^T\bv}{\|\bu\|\|\bv\|}$ to represent the angle between vectors $\bu$ and $\bv$, then

1. $\angle(\bu_{l}, \sum_{i=k}^{l}(\hat{\bx}_i-\by_i))>\pi/2$.

2. $\angle(\bu_{l},\bu_{k-1})=2\angle(\bu_{l}, \sum_{i=k}^{l}(\hat{\bx}_i-\by_i))-\pi$.

The next lemma controls the largest $\|\hat{\bx}_i\|$ through $M_y$.
\begin{lemma}\label{lemma:upperbound}
Under the event of Lemma~\ref{lemma:prob1} and assuming that $\lambda<(7m-\sqrt{m})M_y$, then we have that $\max_{1\leq i\leq m}\|\hat{\bx}_i\|\leq M_d$, where $M_d=25M_y$.
\end{lemma}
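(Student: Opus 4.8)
The plan is to bound $R:=\max_{1\le i\le m}\|\hat{\bx}_i\|$ by isolating the plateau on which the maximum is attained and testing the first-order condition of Lemma~\ref{lemma:deri} against the maximizing vector itself. Fix an index $i^*$ with $\|\hat{\bx}_{i^*}\|=R$, write $\bv=\hat{\bx}_{i^*}$, and let $[p,q]=[p(i^*),n(i^*)]$ be its maximal plateau, of length $N=q-p+1$. On this plateau Lemma~\ref{lemma:deri} gives $2N\bv-2\bSigma+\lambda(\bu_{p-1}-\bu_q)=\b0$, where $\bSigma=\sum_{i=p}^q\by_i$ satisfies $\|\bSigma\|\le M_y\sqrt{N}$ under the event of Lemma~\ref{lemma:prob1}. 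The two maximality sign facts I would use are elementary: if the left neighbor $\hat{\bx}_{p-1}$ is an interior point (so $\|\hat{\bx}_{p-1}\|\le R$) then $\bu_{p-1}^T\bv=(R^2-\hat{\bx}_{p-1}^T\bv)/\|\bv-\hat{\bx}_{p-1}\|\ge0$, and symmetrically $\bu_q^T\bv\le0$ whenever $\hat{\bx}_{q+1}$ is interior.

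First I would dispose of the \emph{interior case}, where $2\le p$ and $q\le m-1$ so both sign facts hold. Taking the inner product of the first-order condition with $\bv$ gives $2NR^2-2\bSigma^T\bv+\lambda(\bu_{p-1}^T\bv-\bu_q^T\bv)=0$; since the bracketed term is nonnegative, $NR^2\le\bSigma^T\bv\le M_y\sqrt{N}\,R$, whence $R\le M_y/\sqrt{N}\le M_y\le M_d$. So a maximal plateau that touches neither end is already harmless, and it remains only to treat plateaus adjacent to the boundary values $\ba=\hat{\bx}_0$ or $\bb=\hat{\bx}_{m+1}$, whose norms are unconstrained.

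For the \emph{boundary case}, take $p=1$ and $q\le m-1$ (the cases $q=m$ and $p=1,q=m$ being analogous, the latter being the whole interval, for which the crude bound $M_y/\sqrt{m}+\lambda/m$ already suffices). Here $\bu_q^T\bv\le0$ still holds, but $\bu_0^T\bv$ has no definite sign, so projecting the first-order condition onto $\hat{\bv}=\bv/R$ yields only the weak inequality $R\le M_y/\sqrt{N}+\lambda/(2N)$, useless for short plateaus. The idea is to recover the missing control from the geometry in the consequences of Lemma~\ref{lemma:deri}: the identity $\bu_q-\bu_0=2(N\bv-\bSigma)/\lambda$ together with $\bu_q^T\bv\le0$, $\|\bu_0\|=1$, and the angle relation $\angle(\bu_q,\bu_0)=2\angle(\bu_q,N\bv-\bSigma)-\pi$ says that the \emph{unit} difference $\bu_q-\bu_0$ cannot align with $\bv$; a large $R$ would force this difference to point too strongly along $\bv$ (indeed to have length exceeding $2$), which is impossible. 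Making this quantitative and then feeding in $\lambda<(7m-\sqrt{m})M_y$ while optimizing over $N$ is what produces the constant $25$.

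I expect the boundary case with a \emph{short} plateau to be the main obstacle. Because the objective is convex, no purely local (first- or second-order) argument can rule out a short, high plateau abutting a large boundary value; the only leverage is the unit-vector constraint on $\bu_q-\bu_0$ forced by $\bu_q^T\bv\le0$, or equivalently the observation that a short high plateau near the boundary must be supported by a longer nearby stretch of large values on which a \emph{windowed} first-order condition (summing Lemma~\ref{lemma:deri} over an interval $[1,l]$ ending at a genuine jump) averages the boundary influence down to order $M_y/\sqrt{l}+\lambda/l$. Turning this tension into the explicit bound $R\le 25M_y=M_d$, and identifying exactly where the hypothesis $\lambda<(7m-\sqrt{m})M_y$ is consumed, is the delicate part of the argument; everything else is the routine inner-product bookkeeping sketched above.
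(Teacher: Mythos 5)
Your interior case is sound and coincides with the paper's Case 1: on a maximal plateau touching neither endpoint, the two sign facts for $\bu_{p-1}^T\bv$ and $\bu_q^T\bv$ combined with Lemma~\ref{lemma:deri} give $R\le M_y/\sqrt{N}$, and the whole-interval case via $\|\bu_q-\bu_0\|\le 2$ is also fine. The problem is that for the remaining boundary case you have not supplied a proof; you name two candidate mechanisms and then state that turning the tension into the explicit bound $R\le 25M_y$ ``is the delicate part of the argument.'' That delicate part \emph{is} the lemma. Concretely, the windowed first-order condition over $[1,l]$ only yields the upper bound $\sum_{i=1}^{l}\hat{\bx}_i^T\hat{\bv}\le M_y\sqrt{l}+\lambda$; to convert this into a bound on $R$ you need a matching lower bound of the form $\sum_{i=1}^{l}\hat{\bx}_i^T\hat{\bv}\ge c\,lR$ for some $l$ comparable to $m$, i.e., you must show that a short, high plateau at the boundary is necessarily followed by a long stretch of iterates whose projections onto $\hat{\bv}$ remain of order $R$. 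In the vector-valued setting this is not automatic --- the subsequent $\hat{\bx}_i$ could a priori rotate away from $\hat{\bv}$ or collapse in norm --- and your sketch gives no argument ruling this out. Nor does the constraint $\|\bu_q-\bu_0\|\le 2$ alone do the job: it gives $NR\le\lambda+M_y\sqrt{N}$, which for small $N$ is exactly the weak bound you already discarded.

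The paper closes this gap with a multi-step argument that your sketch does not anticipate. After disposing of the subcase where $\bu_k^T\hat{\bx}_k/\|\hat{\bx}_k\|<-c_0$ by a one-sided translation of the initial plateau along $\bu_k$, it dichotomizes on whether the cumulative path length $\sum_i\|\hat{\bx}_i-\hat{\bx}_{i+1}\|$ starting from the plateau ever exceeds $c_1M_d'$. If it never does, all iterates lie in a small ball around $\hat{\bx}_k$ and the global shrinkage $\tilde{\bx}_i=\hat{\bx}_i-\epsilon\hat{\bx}_k$ yields the contradiction; this is where the hypothesis $\lambda<(7m-\sqrt{m})M_y$ is consumed, via \eqref{eq:require2} and \eqref{eq:require4}. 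If it does, an angle-tracking argument (\eqref{eq:bu2}--\eqref{eq:bu4}) shows the jump directions $\bu_j$ stay within a narrow cone, and an induction (\eqref{eq:final_argument}) shows $\bd^T\hat{\bx}_i$ is nondecreasing thereafter, which forces all iterates back into a ball of radius roughly $0.72M_d'$ and reduces to the shrinkage case. You would need to reproduce something equivalent to this directional control; without it the boundary case --- which you correctly identify as the main obstacle --- remains open, so the proposal as written does not prove the lemma.
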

\begin{proof}
Let $M_d'=\max_{1\leq i\leq m}\|\bx_i\|$ and $k=\argmax_{1\leq i\leq m}\|\bx_i\|$, and we will prove Lemma~\ref{lemma:upperbound} by contradiction: we assume that $M_d'>M_d$ and look for contradictions by investigating difference cases:

Case 1.  $p(k)>1$ and $n(k)<m$. Then by assumption, we have $\|\hat{\bx}_{k}\|=M_d'$, $\|\hat{\bx}_{p(k)-1}\|, \|\hat{\bx}_{n(k)+1}\|\leq M_d'$, and $\hat{\bx}_{p(k)-1}, \hat{\bx}_{n(k)+1}\neq \hat{\bx}_k$.

Then we have $\bu_{p(k)-1}^T\hat{\bx}_k>0$, $\bu_{n(k)}^T\hat{\bx}_k<0$. In addition, \[
\hat{\bx}_k^T\sum_{i=p(k)}^{n(k)}(\hat{\bx}_k-\by_i)\geq (n(k)-p(k)+1)M_d'^2-\sqrt{n(k)-p(k)+1}M_d'M_y> 0.\]
Combining the three inequalities above, we have a contradiction to Lemma~\ref{lemma:deri} since $M_d'>M_y.$

Case 2. $p(k)=0$ or $1$. For simplicity, WLOG we may assume that $k=n(k)$, that is, $\hat{\bx}_1=\cdots =\hat{\bx}_k\neq \hat{\bx}_{k+1}$ (if $k=m+1$, then the proof would follow the proof in case 2b-I after \eqref{eq:dist}). By assumption, we have $\|\hat{\bx}_{k+1}\|\leq M_d'$, and as a result,
\[
\bu_k^T\frac{\hat{\bx}_k}{\|\hat{\bx}_k\|}=\frac{\hat{\bx}_{k+1}-\hat{\bx}_k}{\|\hat{\bx}_{k+1}-\hat{\bx}_k\|}^T\frac{\hat{\bx}_k}{\|\hat{\bx}_k\|}\leq 0.\]

Now we consider two cases:

Case 2a. $\bu_k^T\frac{\hat{\bx}_k}{\|\hat{\bx}_k\|}<-c_0$, where $c_0=0.05$.

For this case, consider $\tilde{\bx}_i=\hat{\bx}_i+\epsilon\bu_k$ for $1\leq i\leq k$ and $\tilde{\bx}_i=\hat{\bx}_i$ otherwise. Then we have
\begin{equation}
\|\hat{\bx}_i-\hat{\bx}_{i+1}\|-\|\tilde{\bx}_i-\tilde{\bx}_{i+1}\|=\begin{cases}\epsilon\|\bu_k\|,\,\,&\text{if $i=k$}\\
\|\hat{\bx}_0-\hat{\bx}_{1}\|-\|\hat{\bx}_0-\hat{\bx}_{1}-\epsilon\bu_k\|\geq -\epsilon\|\bu_k\|,\,\,\,&\text{if $i=0$}\\
0,\,\,&\text{if $i\neq 0,k$}.\end{cases}
\end{equation}
Therefore,\[
\sum_{i=0}^{m}\|\hat{\bx}_i-\hat{\bx}_{i+1}\|\geq \sum_{i=0}^{m}\|\tilde{\bx}_i-\tilde{\bx}_{i+1}\|.
\]
Combining it with
\[
\sum_{i=1}^m\|\hat{\bx}_i-\by_i\|^2-\sum_{i=1}^m\|\tilde{\bx}_i-\by_i\|^2+O(\epsilon^2)
=-2\epsilon\bu_k^T\sum_{i=1}^k(\hat{\bx}_i-\by_i)\geq 2\epsilon(c_0kM_d'-M_y\sqrt{k})+O(\epsilon^2)
\]
and \begin{equation}c_0>\frac{M_y}{M_d'},\label{eq:require1}\end{equation} we have a contradiction to $G(\{\hat{\bx}_i\}_{i=1}^m)\leq G(\{\tilde{\bx}_i\}_{i=1}^m)$.

Case 2b. $-c_0\leq \bu_k^T\frac{\hat{\bx}_k}{\|\hat{\bx}_k\|}\leq 0$. Then we let $l=\arg\min_{j=k+1, \cdots, m}\sum_{i=k}^{j-1}\|\hat{\bx}_i-\hat{\bx}_{i+1}\|\geq c_1M_d'$, where $c_1=0.1$.

Case 2b-I: If such $l$ does not exist, then it means that \begin{equation}\label{eq:dist}\text{$\|\hat{\bx}_i-\hat{\bx}_k\|\leq c_1\|\hat{\bx}_k\|$ for all $1\leq i\leq m$.}\end{equation} Consider the sequence $\tilde{\bx}_i=\hat{\bx}_i-\epsilon\hat{\bx}_k$, then we have
\[
\sum_{i=0}^{m}\|\hat{\bx}_i-\hat{\bx}_{i+1}\|-\sum_{i=0}^{m}\|\tilde{\bx}_i-\tilde{\bx}_{i+1}\|\geq -2\epsilon M_d'
\]
and
\[
\sum_{i=1}^m\|\hat{\bx}_i-\by_i\|^2-\sum_{i=1}^m\|\tilde{\bx}_i-\by_i\|^2
=2\epsilon\hat{\bx}_k^T\sum_{i=1}^m(\hat{\bx}_i-\by_i)\geq 2\epsilon\left(m(1-c_1) M_d'^2-M_d' M_y\sqrt{m}\right).
\]
Combining these two estimations with \begin{equation}m (1-c_1) M_d' - M_y\sqrt{m}-\lambda>0,\label{eq:require2}\end{equation}  we have a contradiction to $G(\{\hat{\bx}_i\}_{i=1}^m)\leq G(\{\tilde{\bx}_i\}_{i=1}^m)$.

Case 2b-II: Now assuming that $l\leq m$. Then  for all $k+1\leq j\leq l-1$, $\|\hat{\bx}_j-\hat{\bx}_k\|\leq c_1\|\hat{\bx}_k\|$ and
\[
\left\|\sum_{i=k+1}^{j}(\hat{\bx}_i-\by_i)-(j-k)\hat{\bx}_k\right\|\leq c_1(j-k)\hat{\bx}_k+M_y\sqrt{j-k},
\]
which implies that $\angle(\sum_{i=k+1}^{j}(\hat{\bx}_i-\by_i),\hat{\bx}_k)\leq \sin^{-1}\left(c_1+\frac{M_y}{M_d'}\right)$.
Combining it with $\angle(\bu_k,\hat{\bx}_k)=\sin^{-1}c_0+\pi/2$ and the analysis after Lemma~\ref{lemma:deri},
\[
\angle(\bu_k,\bu_j)\leq 2\sin^{-1}c_0+2\sin^{-1}\left(c_1+\frac{M_y}{M_d'}\right).
\]
Therefore, for any $k\leq j_1\leq j_2\leq l-1$,
\begin{equation}\label{eq:bu2}
\angle(\bu_{j_1},\bu_{j_2})\leq 4\sin^{-1}c_0+4\sin^{-1}\left(c_1+\frac{M_y}{M_d'}\right).
\end{equation}
Let $c_2=\cos\left(4\sin^{-1}c_0+4\sin^{-1}\left(c_1+\frac{M_y}{M_d'}\right)\right)$, then \begin{equation}\label{eq:bu3}
\text{$c_2>0.7$ and $\bu_{j_1}^T\bu_{j_2}>c_2$.}
\end{equation}
Let $\hat{\bx}_l'$ be the point on the line segment connecting $\hat{\bx}_{l-1}$ and $\hat{\bx}_{l}$ such that
\begin{equation}\label{eq:bu4}
\sum_{i=k}^{l-1}\|\hat{\bx}_i-\hat{\bx}_{i+1}\|+\|\hat{\bx}_{l-1}-\hat{\bx}_{l}'\|=c_1M_d',
\end{equation}
\begin{align}
&\bu_{l-1}^T\hat{\bx}_l'-\bu_{k}^T\hat{\bx}_k\label{eq:endofline}
=\bu_{l-1}^T(\hat{\bx}_l'-\hat{\bx}_k)+(\bu_{l-1}-\bu_{k})^T\hat{\bx}_k\\
=&\bu_{l-1}^T\left(\sum_{i=k}^{l-2}\|\hat{\bx}_{i+1}-\hat{\bx}_i\|\bu_i+\|\hat{\bx}_{l}'-\hat{\bx}_{l-1}\|\bu_{l-1}\right)+\frac{1}{\lambda}\left(\sum_{i=k+1}^{l-1}(\hat{\bx}_i-\by_i)\right)^T\hat{\bx}_k\nonumber\\\nonumber
\geq& c_2 c_1M_d'+\frac{1}{\lambda}\left((l-k-1)(1-c_1)M_d'^2-\sqrt{l-k-1}M_d'M_y\right),
\end{align}
where the inequality applies \eqref{eq:bu3} and \eqref{eq:bu4}. Combining it with $\bu_{k}^T\hat{\bx}_k\geq -c_0M_d'$ and \begin{equation}
(1-c_1)M_d'>M_y,\,\,c_2 c_1>c_0.
\label{eq:require3}\end{equation}
we have $\bu_{l-1}^T\hat{\bx}_l'\geq 0$. In addition, \eqref{eq:bu4} implies $\|\hat{\bx}_l'\|\geq M_d'(1-c_1)$.

Let $\bd=\hat{\bx}_l'/\|\hat{\bx}_l'\|$ be the unit vector in the direction of $\hat{\bx}_l'$, we will use induction to prove that
\begin{align}\label{eq:final_argument}
\text{$\bd^T\bu_{i}>0$ and $\bd^T\hat{\bx}_i$ is nondecreasing for all for all $i\geq l-1$.}
\end{align}
If \eqref{eq:final_argument} holds, then we have $\|\bd^T\hat{\bx}_i\|\geq \|\bd^T\hat{\bx}_l\|\geq \|\bd^T\hat{\bx}_l'\|=(1-c_1)M_d'$ for all $i\geq l$. As a result,
\[
\|\hat{\bx}_i-\hat{\bx}_l'\|^2\leq 2M_d'^2-2\hat{\bx}_l'^T\hat{\bx}_i\leq 2M_d'^2-2(1-c_1)M_d'^2=(4c_1-2c_1^2)M_d'^2.
\]
Combining it with $\|\hat{\bx}_k-\hat{\bx}_l'\|\leq c_1M_d'$ and $\hat{\bx}_1=\cdots=\hat{\bx}_k$, we have that for all $1\leq i\leq m$,
\[
\|\hat{\bx}_i-\hat{\bx}_l'\|\leq (c_1+\sqrt{4c_1-2c_1^2})M_d'.
\]
Now following the analysis for the case 2b-I after \eqref{eq:dist} (with $c_1=0.1$ replaced by $c_1+\sqrt{4c_1-2c_1^2}<0.72$), we have the contradiction since
\begin{equation}m (1-c_1-\sqrt{4c_1-2c_1^2}) M_d - M_y\sqrt{m}-\lambda>0.\label{eq:require4}\end{equation}

It remains to prove \eqref{eq:final_argument}, which can be proved using induction. Considering that for all $l\leq i\leq n(l)$,
\[
\bd^T(\hat{\bx}_i-\hat{\by}_i)\geq \bd^T(\hat{\bx}_l'-\hat{\by}_i)\geq (1-c_1)M_d'-M_y>0,
\]
so Lemma~\ref{lemma:deri} (note that here $p(l)=l$) implies that
\[
\bd^T\bu_{n(l)}\geq \bd^T\bu_{l-1}\geq 0.
\]
Then $\bx_{n(l)+1}=\bx_{n(l)}+\|\bx_{n(l)+1}-\bx_{n(l)}\|\bu_{n(l)}$ implies that $\bd^T\bx_{n(l)+1}\geq \bd^T\bx_{l}$. Repeat the steps above, \eqref{eq:final_argument} is proved.

Case 3. $n(k)=m$ or $m+1$. The proof is the same as the proof of case 2.
%
\end{proof}

Now we have proved that $\max_{i=1,\cdots,m}\|\hat{\bx}_i\|\leq M_d=25M_y$. Based on this result, we will prove the following lemma:
\begin{lemma}\label{lemma:order}
If $\hat{\bx}_l\neq \hat{\bx}_{l+1}$ and $\bu_{l}^T\hat{\bx}_l\leq-\sqrt{\frac{M_dM_y^2}{\lambda}}$, then  $l \leq l_0+1$ for  $l_0=\lfloor \frac{M_y^2}{|\bu_{l}^T\hat{\bx}_l|^2}\rfloor$.

Similarly, if $\hat{\bx}_l\neq \hat{\bx}_{l-1}$ and $\bu_{l-1}^T\hat{\bx}_l> \sqrt{\frac{M_dM_y^2}{\lambda}}$, then  $m-l \leq l_1+1$ for $l_1=\lfloor \frac{M_y^2}{|\bu_{l-1}^T\hat{\bx}_l|^2}\rfloor$.
\end{lemma}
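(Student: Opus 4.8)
The plan is to prove the first statement; the second follows by reversing the order of the indices (relabeling $i\mapsto m+1-i$), which turns the outgoing jump $\bu_{l-1}$ into the negative of an outgoing direction and converts the hypothesis $\bu_{l-1}^T\hat{\bx}_l>\sqrt{M_dM_y^2/\lambda}$ into exactly the hypothesis of the first part, with ``$l$'' replaced by ``$m-l$''. So I fix the setup of the first part: write $\bd=\bu_l$ and $s=\bu_l^T\hat{\bx}_l$, so that $s\le -\sqrt{M_dM_y^2/\lambda}<0$, and it suffices to show $l\le M_y^2/s^2$ (which gives $l\le\lfloor M_y^2/s^2\rfloor=l_0\le l_0+1$). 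I would reduce this to two facts about the scalar sequence $a_i:=\bd^T\hat{\bx}_i$: (a) a prefix lower bound $\sum_{i=1}^l a_i\ge -M_y\sqrt{l}$, and (b) the monotonicity $a_i\le a_l=s$ for every $1\le i\le l$. Granting (a) and (b), $-M_y\sqrt l\le\sum_{i=1}^l a_i\le l s$, and since $s<0$ this rearranges to $\sqrt l\,|s|\le M_y$, i.e. $l\le M_y^2/s^2$.

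For (a) I would perturb the whole prefix: set $\tilde{\bx}_i=\hat{\bx}_i+\epsilon\bd$ for $1\le i\le l$ and $\tilde{\bx}_i=\hat{\bx}_i$ otherwise. Only the two penalty terms at the boundaries $i=0$ and $i=l$ change. Because $\bd=\bu_l$ points from $\hat{\bx}_l$ toward $\hat{\bx}_{l+1}$, the term at $i=l$ decreases at rate exactly $1$, while the term at $i=0$ changes at a rate in $[-1,1]$; hence the penalty is nonincreasing to first order. Optimality of $\{\hat{\bx}_i\}$ then forces the first-order change of the quadratic part to be nonnegative, giving $\bd^T\sum_{i=1}^l(\hat{\bx}_i-\by_i)\ge0$; combined with $|\bd^T\sum_{i=1}^l\by_i|\le\|\sum_{i=1}^l\by_i\|\le M_y\sqrt l$ from \eqref{eq:main_assumption}, this yields (a).

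The heart of the argument, and the step I expect to be the main obstacle, is the monotonicity (b), i.e. showing there is no ``down-jump'' before $l$: $\bd^T\bu_i\ge0$ at every jump location $i<l$, so that $a_i$ is nondecreasing and hence $\le a_l=s$. I would argue by contradiction, taking $i_0<l$ to be the \emph{largest} jump index with $\bd^T\bu_{i_0}<0$; then $a_i$ is nondecreasing on $[i_0+1,l]$, so $a_i\le a_l=s$ there. Perturbing the block-union $[i_0+1,l]$ by $+\epsilon\bd$ (again only the boundaries at $i_0$ and $l$ move, since $i_0+1$ starts and $l$ ends a constant block), the first-order change of the penalty is $\epsilon\lambda(\bd^T\bu_{i_0}-1)<0$, so optimality forces $\bd^T\sum_{i=i_0+1}^l(\hat{\bx}_i-\by_i)\ge\tfrac{\lambda}{2}\bigl(1-\bd^T\bu_{i_0}\bigr)>\tfrac{\lambda}{2}$. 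On the other hand, using $a_i\le s$ and \eqref{eq:main_assumption}, this same quantity is at most $(l-i_0)s+M_y\sqrt{l-i_0}\le\max_{t\ge0}\bigl(ts+M_y\sqrt t\bigr)=M_y^2/(4|s|)$. These two estimates clash once $M_y^2/(4|s|)\le\lambda/2$, i.e. once $|s|\ge M_y^2/(2\lambda)$; and the hypothesis $|s|\ge\sqrt{M_dM_y^2/\lambda}$ together with $M_d=25M_y$ and $\lambda\ge 25^2M_y$ (so that $4\lambda M_d\ge M_y^2$) guarantees precisely $\sqrt{M_dM_y^2/\lambda}\ge M_y^2/(2\lambda)$. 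This contradiction rules out any down-jump, establishing (b).

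Two bookkeeping points will need care. First, each perturbation must touch only the two advertised penalty terms, which is why I shift \emph{unions of whole constant blocks} (the endpoints $i_0+1$ and $l$ are block boundaries) and why $\bu_{i_0},\bu_l$ are well defined, exactly as in the perturbation arguments of Lemma~\ref{lemma:upperbound} and Lemma~\ref{lemma:deri}. Second, every inequality above is first order in $\epsilon$, so the $O(\epsilon^2)$ data terms are absorbed by letting $\epsilon\to0^{+}$. It is worth noting that the threshold $\sqrt{M_dM_y^2/\lambda}$ enters only to secure the single scalar inequality $|s|\ge M_y^2/(2\lambda)$; the global bound $\|\hat{\bx}_i\|\le M_d$ is not otherwise used in this lemma, which suggests the threshold could be tightened.
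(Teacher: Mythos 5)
Your proof is correct, but it takes a genuinely different route from the paper's. The paper argues \emph{locally}: it uses the stationarity identity of Lemma~\ref{lemma:deri} to bound how fast the jump directions can rotate, concluding $\|\bu_i-\bu_l\|\le\tfrac12$ (hence $\bu_l^T\bu_i>0$) only for indices $i$ within a window of length $O(\lambda/M_d)$ ending at $l$; the hypothesis threshold $\sqrt{M_dM_y^2/\lambda}$ is used precisely to guarantee $l_0$ fits inside that window, so that $\bu_l^T\hat{\bx}_k\le\bu_l^T\hat{\bx}_l$ for all $k\in[l-l_0,l]$, after which a single perturbation of the block $[l-l_0,l]$ in the direction $\bu_l$ contradicts the definition of $l_0$. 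You instead argue \emph{globally}: you establish monotonicity of $a_i=\bu_l^T\hat{\bx}_i$ on all of $[1,l]$ by a separate perturbation at the last ``down-jump'' (which the paper never does), and then close with a prefix-sum bound obtained by shifting the entire prefix. The two approaches trade different things: the paper's window argument is shorter once Lemma~\ref{lemma:deri} is available and is the mechanism reused in Lemma~\ref{lemma:main}, while yours isolates exactly where the threshold enters (only to secure $|\bu_l^T\hat{\bx}_l|\ge M_y^2/(2\lambda)$, rather than to control the window length), yields the marginally sharper conclusion $l\le l_0$, and avoids the global bound $M_d$ entirely --- your closing observation that the threshold could be weakened is accurate. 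Your boundary bookkeeping (shifting unions of whole constant blocks so only two penalty terms move, with the term at $i=l$ decreasing at rate exactly $1$) is the same device the paper uses in its perturbations, and your reduction of the second claim by index reversal is sound since $G$ is symmetric under reversal with $\ba$ and $\bb$ exchanged.
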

\begin{proof}
Lemma~\ref{lemma:deri} implies that for $l-\frac{\lambda}{M_d}\leq i \leq l-1$, $\|\bu_i-\bu_{l}\|\leq \frac{1}{2}$, which implies that $\bu_{l}^T\bu_i>0$. In addition, we have
\[\hat{\bx}_l-\hat{\bx}_k=\sum_{i=k}^{l-1}\bu_i\|\hat{\bx}_i-\hat{\bx}_{i+1}\|\] by definition. Combining it with $l_0\leq \frac{\lambda}{4M_d}$, for any $l-l_0 \leq k \leq l-1$,
\begin{equation}\label{eq:temp}
\bu_{l}^T(\hat{\bx}_l-\hat{\bx}_k)=\sum_{i=k}^{l-1}\bu_{l}^T\bu_i\|\hat{\bx}_i-\hat{\bx}_{i+1}\|\geq 0.
\end{equation}
By contradiction we assume that $l>l_0+1$, and consider the $\tilde{\bx}_i=\hat{\bx}_i+\epsilon\bu_{l}$ for all $l-l_0\leq i\leq l$ and $\tilde{\bx}_i=\hat{\bx}_i$ otherwise, then we have
\begin{align}\label{eq:temp1}
\sum_{i=0}^{m}\|\hat{\bx}_i-\hat{\bx}_{i+1}\|-\sum_{i=0}^{m}\|\tilde{\bx}_i-\tilde{\bx}_{i+1}\|\geq 0
\end{align}
and
\begin{align}\label{eq:temp2}
&\sum_{i=1}^m\|\hat{\bx}_i-\by_i\|^2-\sum_{i=1}^m\|\tilde{\bx}_i-\by_i\|^2
=-2\epsilon\bu_{l}^T\sum_{i=l-l_0}^{l}(\hat{\bx}_i-\by_i)\geq-2\epsilon\bu_{l}^T\sum_{i=l-l_0}^{l}(\hat{\bx}_l-\by_i)\\\geq &2\epsilon((l_0+1)|\bu_{l}^T\hat{\bx}_l|-M_y\sqrt{l_0+1})> 0.\nonumber
\end{align}
where the first inequality applies \eqref{eq:temp} and the last inequality follows from the definition of $l_0$. Combining  \eqref{eq:temp1} and \eqref{eq:temp2}, we have a contradiction to $G(\{\tilde{\bx}_i\}_{i=1}^m)\geq G(\{\hat{\bx}_i\}_{i=1}^m)$.
\end{proof}

\begin{lemma}\label{lemma:main}
Assuming $\lambda\geq M_d^2/M_y$ and $c_1=1/25$, then if $\|\hat{\bx}_l\|>\frac{1}{c_1}\sqrt{\frac{M_dM_y^2}{\lambda}}$ and $\frac{5}{c_1^2}\frac{M_y^2}{\|\hat{\bx}_l\|^2} \leq l\leq m-\frac{5}{c_1^2}\frac{M_y^2}{\|\hat{\bx}_l\|^2}$, then for all  $\frac{1}{c_1^2}\frac{M_y^2}{\|\hat{\bx}_l\|^2}+1\leq i\leq m-\frac{1}{c_1^2}\frac{M_y^2}{\|\hat{\bx}_l\|^2}-1$, $\|\hat{\bx}_i-\hat{\bx}_l\|\leq \|\hat{\bx}_l\|/2$.
\end{lemma}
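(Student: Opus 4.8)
The plan is to reduce to the case $i\ge l$ by the symmetry of the two halves of Lemma~\ref{lemma:order} (the case $i\le l$ is identical after the reflection $j\mapsto m+1-j$, which swaps ``forward'' and ``incoming'' and swaps the two endpoints), and then to show that as we march outward from $l$ the iterate cannot leave the ball of radius $r/2=\tfrac12\|\hat{\bx}_l\|$ about $\hat{\bx}_l$. Throughout write $r=\|\hat{\bx}_l\|$, $\be=\hat{\bx}_l/r$, $\rho=M_y^2/r^2$, $s=\sqrt{M_dM_y^2/\lambda}$ and $\bDelta_j=\hat{\bx}_{j+1}-\hat{\bx}_j$. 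The first step is to convert Lemma~\ref{lemma:order} into uniform interior bounds on the radial parts of the jumps: the margin $\tfrac1{c_1^2}\rho$ is calibrated so that $j-1\ge\tfrac1{c_1^2}\rho$ forces $M_y/\sqrt{j-1}\le c_1 r$, and the standing hypotheses $\lambda\ge M_d^2/M_y$ and $r>\tfrac1{c_1}s$ give $s<c_1 r$. Hence, for any interior $j$, either $-\bu_j^T\hat{\bx}_j<s<c_1 r$ outright, or $-\bu_j^T\hat{\bx}_j\ge s$ and the first half of Lemma~\ref{lemma:order} bounds it by $M_y/\sqrt{j-1}\le c_1 r$; either way $-\bu_j^T\hat{\bx}_j\le c_1 r$ whenever $j-1\ge\tfrac1{c_1^2}\rho$, and symmetrically $\bu_{j-1}^T\hat{\bx}_j\le c_1 r$ whenever $m-j\ge\tfrac1{c_1^2}\rho+1$. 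Because $l$ lies at distance at least $\tfrac5{c_1^2}\rho$ from each endpoint, every index strictly between $l$ and any target $i\in[\tfrac1{c_1^2}\rho+1,\,m-\tfrac1{c_1^2}\rho-1]$ meets both margin conditions, so both radial bounds are available along the entire stretch we traverse.

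Second, I would turn these radial bounds into control of the displacement $\hat{\bx}_i-\hat{\bx}_l$. Combining the forward bound at $j$ with the incoming bound at $j+1$ gives $\|\bDelta_j\|^2=(\hat{\bx}_{j+1}-\hat{\bx}_j)^T\bDelta_j\le 2c_1 r\|\bDelta_j\|$, so each interior jump satisfies $\|\bDelta_j\|\le 2c_1 r$, and the same two inequalities yield $\bigl|\,\|\hat{\bx}_{j+1}\|^2-\|\hat{\bx}_j\|^2\bigr|\le 2c_1 r\|\bDelta_j\|$, so the norm drifts slowly. Geometrically, a run of nearly parallel jumps moves $\hat{\bx}_j$ along an approximate line at distance $\approx r$ from the origin, and on such a line the radial component swings through zero and then grows; the cap $|\text{radial}|\le c_1 r$ therefore limits how far the iterate can travel along one coherent direction. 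To keep the directions coherent I would invoke Lemma~\ref{lemma:deri} and its two angular consequences, which bound the rotation $\|\bu_{a-1}-\bu_b\|$ over a block by $\tfrac2\lambda$ times its accumulated residual. I would then run a bootstrap: under the inductive hypothesis $\|\hat{\bx}_j-\hat{\bx}_l\|\le r/2$ up to the current index, the local radial direction $\hat{\bx}_j/\|\hat{\bx}_j\|$ stays within a fixed angle of $\be$, the interior bounds pin $\|\hat{\bx}_j\|$ near $r$ and keep $\be^T\hat{\bx}_j$ from dropping, and the factor‑$5$ slack in $l$'s margin leaves enough room to re-close the inequality with strict improvement, extending it to the next index.

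The hard part will be exactly this second step, because the vector setting admits slow tangential drift that the radial bounds alone do not see: on the sphere of radius $r$ a sequence of individually small, radially-neutral steps could a priori accumulate into a displacement exceeding $r/2$. A naive estimate confirms the gap, since at the boundary of the ball $\hat{\bx}_j/\|\hat{\bx}_j\|$ can make an angle as large as $30^\circ$ with $\be$, so bounding $\be^T\bu_j$ through the radial bound loses a constant and does not close by summation. The resolution is to use the rigidity supplied by Lemma~\ref{lemma:deri}: sustained displacement away from $\hat{\bx}_l$ forces the jump directions to remain aligned over long blocks, while the consistency relation $\bu_{a-1}-\bu_b=-\tfrac2\lambda\sum_{i=a}^b(\hat{\bx}_i-\by_i)$ simultaneously tilts those directions radially (the dominant term being $-\tfrac2\lambda(b-a+1)\hat{\bx}_l$), so that continued motion eventually produces an inward or outward radial component exceeding $c_1 r$ — precisely what the interior bounds of the first step forbid. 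Quantifying this trade-off with constants compatible with $c_1=1/25$ and the prescribed margins is the technical crux; the symmetric argument for $i\le l$ then follows verbatim under the reflection.
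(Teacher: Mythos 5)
Your preliminary reductions are sound and in fact package the paper's Cases 1--3 rather cleanly: using the two halves of Lemma~\ref{lemma:order} together with $\sqrt{M_dM_y^2/\lambda}<c_1\|\hat{\bx}_l\|$ to get the two-sided interior bounds $|\bu_j^T\hat{\bx}_j|\leq c_1\|\hat{\bx}_l\|$ and $|\bu_j^T\hat{\bx}_{j+1}|\leq c_1\|\hat{\bx}_l\|$, hence $\|\hat{\bx}_{j+1}-\hat{\bx}_j\|\leq 2c_1\|\hat{\bx}_l\|$, is correct and is exactly the role these bounds play in the paper. But there is a genuine gap at the step you yourself flag as ``the technical crux'': the argument that accumulated tangential motion must eventually produce a radial component exceeding $c_1\|\hat{\bx}_l\|$ is only described, never proved, and the lemma lives entirely in that step. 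Your per-step bounds and the norm-drift estimate $|\,\|\hat{\bx}_{j+1}\|^2-\|\hat{\bx}_j\|^2|\leq 2c_1\|\hat{\bx}_l\|\,\|\bDelta_j\|$ only control the total drift in terms of the total path length $\sum_j\|\bDelta_j\|$, which is precisely the quantity you have not bounded, so the estimate is circular; and the induction on the displacement $\|\hat{\bx}_j-\hat{\bx}_l\|\leq\|\hat{\bx}_l\|/2$ does not close on its own, since (as you note) a step tangential to $\hat{\bx}_j$ can still carry a component of up to $\tfrac12\|\bDelta_j\|$ along the fixed direction $\hat{\bx}_l/\|\hat{\bx}_l\|$ at the boundary of the ball, and these components can accumulate.

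What is missing is the paper's stopping-time construction: define $k$ as the first index past $n(l)$ at which the accumulated path length $\sum_{i=l}^{j-1}\|\hat{\bx}_i-\hat{\bx}_{i+1}\|$ exceeds $c_3\|\hat{\bx}_l\|$ with $c_3=19c_1/2$; show via Lemma~\ref{lemma:deri} that the directions stay coherent up to $k$, i.e.\ $\bu_j^T\bu_{n(l)-1}\geq 1/2$ on that stretch (this uses both the angular consequences of Lemma~\ref{lemma:deri} and the Lipschitz bound $\|\bu_j-\bu_{n(l)-1}\|\leq\tfrac{4M_d}{\lambda}(j-n(l))$ on the initial segment); and then evaluate
\[
\bu_{k-1}^T\hat{\bx}_k-\bu_{n(l)}^T\hat{\bx}_{n(l)}
=\bu_{k-1}^T\Bigl(\sum_{i=n(l)}^{k-1}\|\hat{\bx}_{i+1}-\hat{\bx}_i\|\bu_i\Bigr)
+\frac{1}{\lambda}\Bigl(\sum_{i=n(l)+1}^{k-1}(\hat{\bx}_i-\by_i)\Bigr)^T\hat{\bx}_{n(l)}
\geq \frac{c_3}{2}\|\hat{\bx}_l\|-\frac{M_y^2}{2\lambda(1-c_3)},
\]
where the coherence converts path length into a definite gain of $c_3\|\hat{\bx}_l\|/2$ in $\bu_{k-1}^T\hat{\bx}_k$. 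This pushes $\bu_{k-1}^T\hat{\bx}_k$ above $c_1\|\hat{\bx}_l\|\geq\sqrt{M_dM_y^2/\lambda}$, so Lemma~\ref{lemma:order} forces $m-k\leq M_y^2/(c_1^2\|\hat{\bx}_l\|^2)+1$; the symmetric construction gives the bound on the other side, and every interior index then lies within path length $c_3\|\hat{\bx}_l\|<\|\hat{\bx}_l\|/2$ of $\hat{\bx}_l$. Without this telescoping computation and the verification that the constants ($c_1=1/25$, $c_3=19/50$, and the factor-$5$ margins) are mutually compatible, the proposal remains a plan rather than a proof.
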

\begin{proof}
We will prove it  by dividing it into the following cases.

Case 1: $\bu_{n(l)}^T\hat{\bx}_{n(l)}/\|\hat{\bx}_{n(l)}\|\leq -c_1$.
By Lemma~\ref{lemma:order}, then $\bu_{n(l)}^T\hat{\bx}_{n(l)}\leq -c_1\|\hat{\bx}_l\|\leq-\sqrt{\frac{4M_dM_y^2}{\lambda}}$ and $l\leq n(l)\leq \frac{M_y^2}{\|\hat{\bx}_l\|^2c_1^2}+1.$

Case 2:  $\bu_{n(l)}^T\hat{\bx}_{n(l)}/\|\hat{\bx}_{n(l)}\|\geq 2 c_1$.
Then we have $\hat{\bx}_{n(l)}\neq \hat{\bx}_{n(l)+1}$ and $\bu_{n(l)}^T\hat{\bx}_{n(l)+1} > \bu_{n(l)}^T\hat{\bx}_{n(l)}\geq  2c_1\|\hat{\bx}_{n(l)}\|$, and Lemma~\ref{lemma:order} implies that \begin{equation}n(l)+1\geq m-\frac{M_y^2}{4\|\hat{\bx}_{n(l)}\|^2c_1^2}.\label{eq:case2}\end{equation} This shows that $n(l)-p(l)\geq n(l)-l\geq \frac{19M_y^2}{4\|\hat{\bx}_{n(l)}\|^2c_1^2}$.

Now let us consider $\hat{\bx}_{p(l)}^T\bu_{p(l)-1}$. Applying  \[
\angle(\hat{\bx}_l,\sum_{i=p(l)}^{n(l)} (\hat{\bx}_i-\by_i))\leq \sin^{-1}\left(\frac{M_y}{\|\hat{\bx}_l\|\sqrt{n(l)-p(l)}}\right)
\]
with the implication of Lemma~\ref{lemma:deri} that
\[
\angle(-\bu_{p(l)-1},\sum_{i=p(l)}^{n(l)} (\hat{\bx}_i-\by_i))=\angle(\bu_{n(l)},\sum_{i=p(l)}^{n(l)} (\hat{\bx}_i-\by_i)),
\]
we have
\[
\angle(-\bu_{p(l)-1}, \hat{\bx}_l)\geq \sin^{-1}(2c_1)-2\sin^{-1}\left(\frac{M_y}{\|\hat{\bx}_l\|\sqrt{n(l)-p(l)}}\right)\geq \sin^{-1}(2c_1)-2\sin^{-1}\left({\frac{2c_1}{\sqrt{19}}}\right).
\]
Since (for all $c_1\leq 0.5$),
\[
\sin^{-1}(2c_1)-2\sin^{-1}\left({\frac{2c_1}{\sqrt{19}}}\right)>\sin^{-1}(c_1),
\]
 we have $\bu_{p(l)-1}^T\hat{\bx}_{p(l)}/\|\hat{\bx}_{p(l)}\|\leq -c_1$. Combining it with  $\bu_{p(l)-1}^T\hat{\bx}_{p(l)-1}\leq \bu_{p(l)-1}^T\hat{\bx}_{p(l)}$ and Lemma~\ref{lemma:order},  we have  $p(l)\leq \frac{M_y^2}{\|\hat{\bx}_l\|^2c_1^2}+1$. Combining it with \eqref{eq:case2}, the lemma is proved.

Case 3: $\bu_{p(l)-1}^T\hat{\bx}_l/\|\hat{\bx}_l\|\geq c_1$, or $\leq -2c_1$. Then the same analysis from cases 1 and 2 can be applied.

Case 4:  $-c_1\leq \bu_{n(l)}^T\hat{\bx}_l/\|\hat{\bx}_l\|\leq 2c_1$ and $-2c_1\leq \bu_{p(l)-1}^T\hat{\bx}_l/\|\hat{\bx}_l\|\leq c_1$.

Let $k=\min\{j\geq l+1: \sum_{i=l}^{j-1}\|\hat{\bx}_i-\hat{\bx}_{i+1}\|>c_3\|\hat{\bx}_l\|\}$ with $c_3=19c_1/2=19/50$. Then for all $n(l)+\frac{\lambda}{4M_d}\leq j\leq k-1$, we have
\[
\angle\left(\hat{\bx}_l,\sum_{i=n(l)+1}^{j} (\hat{\bx}_i-\by_i)\right)\leq \sin^{-1}\left(\frac{M_y}{\|\hat{\bx}_l\|\sqrt{j-n(l)}}+c_3\right)\leq \sin^{-1}\left(\frac{2M_y\sqrt{M_d}}{\|\hat{\bx}_l\|\sqrt{\lambda}}+c_3\right)
\leq \sin^{-1}\left(c_1+c_3\right)
\]
and Lemma~\ref{lemma:deri} implies that for all $n(l)\leq j \leq n(l)+\frac{\lambda}{4M_d}$,\[
\angle(\bu_j,\bu_{p(l)-1})\leq 2\sin^{-1}(2c_1)+2\sin^{-1}\left(c_1+c_3\right)\leq \pi/3
\]
In addition, for all $n(l)\leq j\leq n(l)+\frac{\lambda}{4M_d}$,
\[
\|\bu_j-\bu_{n(l)-1}\|\leq \frac{4M_d}{\lambda}(j-n(l)).
\]
In summary, for all $n(l)\leq j\leq k-1$, we have $\bu_j^T\bu_{n(l)-1}\geq 1/2$.

Now let us consider $\bu_{k-1}^T\hat{\bx}_k$:
\begin{align*}
&\bu_{k-1}^T\hat{\bx}_k-\bu_{n(l)}^T\hat{\bx}_{n(l)}
=\bu_{k-1}^T(\hat{\bx}_{k}-\hat{\bx}_{n(l)})+(\bu_{k-1}-\bu_{n(l)})^T\hat{\bx}_{n(l)}\\
=&\bu_{k-1}^T\left(\sum_{i=n(l)}^{k-1}\|\hat{\bx}_{i+1}-\hat{\bx}_i\|\bu_i\right)+\frac{1}{\lambda}\left(\sum_{i=n(l)+1}^{k-1}(\hat{\bx}_i-\by_i)\right)^T\hat{\bx}_{n(l)}\\
\geq& c_3\|\hat{\bx}_l\|/2+\frac{1}{\lambda}\left((1-c_3)(k-n(l)-1)\|\hat{\bx}_l\|^2-\sqrt{k-n(l)-1}M_y\|\hat{\bx}_l\|\right)\\
\geq& c_3\|\hat{\bx}_l\|/2-\frac{M_y^2}{2\lambda(1-c_3)}
\geq \left(\frac{c_3}{2}-\frac{c_1M_y}{(1-c_3)\sqrt{\lambda M_d}}\right)\|\hat{\bx}_l\|.
\end{align*}
Since
\[
\left(\frac{c_3}{2}-\frac{c_1M_y}{(1-c_3)\sqrt{\lambda M_d}}-2c_1\right)\geq c_1,
\]
Lemma~\ref{lemma:order} implies that $m-k\leq \frac{M_y^2}{c_1^2\|\hat{\bx}_l\|^2}+1$.
Similarly, if we define $k'=\max\{j\leq l-1: \sum_{i=j}^{l-1}\|\hat{\bx}_i-\hat{\bx}_{i+1}\|>c_3\|\hat{\bx}_l\|\}$, then $k'\geq  \frac{M_y^2}{c_1^2\|\hat{\bx}_l\|^2}+1$. In summary, for all $\frac{M_y^2}{c_1^2\|\hat{\bx}_l\|^2}+1<i<m-(\frac{M_y^2}{c_1^2\|\hat{\bx}_l\|^2}+1)$, we have
\[
\|\hat{\bx}_i-\hat{\bx}_l\|\leq \|\hat{\bx}_l\|/2.
\]
Combining the four cases together, Lemma~\ref{lemma:main} is proved.
\end{proof}
\begin{proof}[Proof of Theorem~\ref{thm:main1}]
Consider the sequence $\tilde{\bx}_i=\hat{\bx}_i-\epsilon\hat{\bx}_l$ for all $c_4 \leq i\leq m-c_4$ with  $c_4=\frac{M_y^2}{c_1^2\|\hat{\bx}_l\|^2}+1$, and  $\tilde{\bx}_i=\hat{\bx}_i$ otherwise, then
\begin{align}\label{eq:temp1}
\sum_{i=0}^{m}\|\hat{\bx}_i-\hat{\bx}_{i+1}\|-\sum_{i=0}^{m}\|\tilde{\bx}_i-\tilde{\bx}_{i+1}\|\geq -2\epsilon \|\hat{\bx}_l\|.
\end{align}
By Lemma~\ref{lemma:main}, if
\[
\|\hat{\bx}_i\|\geq \max\left( \frac{25\sqrt{5}M_y}{\sqrt{i}}, \frac{25\sqrt{5}M_y}{\sqrt{m-i}}, 125\sqrt{\frac{M_y^3}{\lambda}}\right),\]
then
\begin{align}\label{eq:temp2}
&\sum_{i=1}^m\|\hat{\bx}_i-\by_i\|^2-\sum_{i=1}^m\|\tilde{\bx}_i-\by_i\|^2
=-2\epsilon\hat{\bx}_{l}^T\sum_{i=c_4}^{m-c_4}(\hat{\bx}_i-\by_i)+O(\epsilon^2)\\\geq &2\epsilon\left(\frac{m-2c_4}{2}\|\hat{\bx}_l\|^2-M_y\|\hat{\bx}_l\|\sqrt{m-2c_4}\right)+O(\epsilon^2).\nonumber
\end{align}
Combining it with \eqref{eq:temp1}
 and  $G(\{\tilde{\bx}_i\}_{i=1}^m)\geq G(\{\hat{\bx}_i\}_{i=1}^m)$, we have\[
\|\hat{\bx}_l\|\leq \frac{2(M_y\sqrt{m-2c_4}+\lambda)}{m-2c_4}.
\]
If $\|\hat{\bx}_l\|\geq 50M_y/\sqrt{m}$, then $c_4\leq m/4$ and  $\|\hat{\bx}_l\|\leq \frac{2(M_y\sqrt{m-2c_4}+\lambda)}{m-2c_4}\leq \frac{4(M_y\sqrt{m/2}+\lambda)}{m}$, and   Theorem~\ref{thm:main1} is proved.
\end{proof}
.
%
%
%
\section{Conclusion}
In this work, we analyze the total variation regularized estimator (fused lasso) for the problem of change point detection and give element-wise upper bounds of the estimation errors. Based on this estimation, we show that a simple thresholding procedure can be used to detect the change points from the solution to the fused lasso. In addition, we also generalize this analysis to the setting of group fused lasso.

In the future, we plan to continue to work on some unsolved questions in this work. For fused lasso, as discussed in Section~\ref{sec:dicussion}, there exists settings where the estimation error bounds can be improved, and we hope to find a coherent way to describe such scenarios. For group fused lasso, we hope to improve the estimation error bounds from $O(1/\sqrt{\lambda})$ in~\eqref{thm:main1} to $O(1/{\lambda})$, so that it matches the estimation error bounds of the univariate setting. We also plan to investigate the possibility of generalizing this method to graph fused lasso~\cite{Hallac2015,Tansey2015,barberoTV14}, trend filtering~\cite{tibshirani2014,Ramdas2015}, or the structural change estimation problem~\cite{doi:10.1162/003465397557132,doi:10.1002/jae.659,Qian2016}.

\section{Acknowledgement}
The author would like to thank Amit Singer for introducing this interesting problem and  related literature.

\bibliographystyle{abbrv}
\bibliography{bib-online,bibliogr,bib-online2}

\begin{thebibliography}{10}

\bibitem{Alaiz:2013:GFL:2731664.2731674}
C.~M. Ala\'{\i}z, A.~Barbero, and J.~R. Dorronsoro.
\newblock Group fused lasso.
\newblock In {\em Proceedings of the 23rd International Conference on
  Artificial Neural Networks and Machine Learning \&\#151; ICANN 2013 - Volume
  8131}, pages 66--73, New York, NY, USA, 2013. Springer-Verlag New York, Inc.

\bibitem{Arnold2016}
T.~B. Arnold and R.~J. Tibshirani.
\newblock Efficient implementations of the generalized lasso dual path
  algorithm.
\newblock {\em Journal of Computational and Graphical Statistics}, 25(1):1--27,
  2016.

\bibitem{doi:10.1162/003465397557132}
J.~Bai.
\newblock Estimation of a change point in multiple regression models.
\newblock {\em The Review of Economics and Statistics}, 79(4):551--563, 1997.

\bibitem{doi:10.1002/jae.659}
J.~Bai and P.~Perron.
\newblock Computation and analysis of multiple structural change models.
\newblock {\em Journal of Applied Econometrics}, 18(1):1--22.

\bibitem{barberoTV14}
{\'{A}}.~Barbero and S.~Sra.
\newblock {Modular proximal optimization for multidimensional total-variation
  regularization}.
\newblock nov 2014.

\bibitem{measure2005}
A.~Barvinok.
\newblock {Math 710: Measure Concentration}, 2005.

\bibitem{bhattacharya}
P.~K. Bhattacharya.
\newblock {\em Some aspects of change-point analysis}, volume Volume 23 of {\em
  Lecture Notes--Monograph Series}, pages 28--56.
\newblock Institute of Mathematical Statistics, Hayward, CA, 1994.

\bibitem{bleakley:hal-00602121}
K.~Bleakley and J.-P. Vert.
\newblock {The group fused Lasso for multiple change-point detection}.
\newblock working paper or preprint, June 2011.

\bibitem{10.2307/25464745}
L.~Boysen, A.~Kempe, V.~Liebscher, A.~Munk, and O.~Wittich.
\newblock Consistencies and rates of convergence of jump-penalized least
  squares estimators.
\newblock {\em The Annals of Statistics}, 37(1):157--183, 2009.

\bibitem{10.2307/2673465}
J.~V. Braun, R.~K. Braun, and H.~G. Muller.
\newblock Multiple changepoint fitting via quasilikelihood, with application to
  dna sequence segmentation.
\newblock {\em Biometrika}, 87(2):301--314, 2000.

\bibitem{chatterjee2015}
S.~Chatterjee, A.~Guntuboyina, and B.~Sen.
\newblock On risk bounds in isotonic and other shape restricted regression
  problems.
\newblock {\em Ann. Statist.}, 43(4):1774--1800, 08 2015.

\bibitem{csorgo1997limit}
M.~Cs{\"o}rg{\"o} and L.~Horv{\'a}th.
\newblock {\em Limit theorems in change-point analysis}.
\newblock Wiley series in probability and statistics. Wiley, 1997.

\bibitem{dalalyan2017}
A.~S. Dalalyan, M.~Hebiri, and J.~Lederer.
\newblock On the prediction performance of the lasso.
\newblock {\em Bernoulli}, 23(1):552--581, 02 2017.

\bibitem{Fearnhead2006}
P.~Fearnhead.
\newblock Exact and efficient bayesian inference for multiple changepoint
  problems.
\newblock {\em Statistics and Computing}, 16(2):203--213, Jun 2006.

\bibitem{doi:10.1111/rssb.12047}
K.~Frick, A.~Munk, and H.~Sieling.
\newblock Multiscale change point inference.
\newblock {\em Journal of the Royal Statistical Society: Series B (Statistical
  Methodology)}, 76(3):495--580.

\bibitem{fryzlewicz2014}
P.~Fryzlewicz.
\newblock Wild binary segmentation for multiple change-point detection.
\newblock {\em Ann. Statist.}, 42(6):2243--2281, 12 2014.

\bibitem{fryzlewicz2018}
P.~Fryzlewicz.
\newblock Tail-greedy bottom-up data decompositions and fast multiple
  change-point detection.
\newblock {\em Ann. Statist.}, 46(6B):3390--3421, 12 2018.

\bibitem{Gijbels1999}
I.~Gijbels, P.~Hall, and A.~Kneip.
\newblock On the estimation of jump points in smooth curves.
\newblock {\em Annals of the Institute of Statistical Mathematics},
  51(2):231--251, Jun 1999.

\bibitem{Guntuboyina2017}
A.~Guntuboyina, , D.~Lieu, S.~Chatterjee, and B.~Sen.
\newblock {Adaptive Risk Bounds in Univariate Total Variation Denoising and
  Trend Filtering}.
\newblock feb 2017.

\bibitem{Hallac2015}
D.~Hallac, J.~Leskovec, and S.~Boyd.
\newblock {Network Lasso: Clustering and Optimization in Large Graphs}.
\newblock {\em KDD : proceedings. International Conference on Knowledge
  Discovery {\&} Data Mining}, 2015:387--396, aug 2015.

\bibitem{Harchaoui2010}
Z.~Harchaoui and C.~L\'{e}vy-Leduc.
\newblock Multiple change-point estimation with a total variation penalty.
\newblock {\em Journal of the American Statistical Association},
  105(492):1480--1493, 2010.

\bibitem{doi:10.1093/bioinformatics/bti646}
T.~Huang, B.~Wu, P.~Lizardi, and H.~Zhao.
\newblock Detection of dna copy number alterations using penalized least
  squares regression.
\newblock {\em Bioinformatics}, 21(20):3811--3817, 2005.

\bibitem{Lin2016}
K.~Lin, J.~Sharpnack, A.~Rinaldo, and R.~J. Tibshirani.
\newblock {Approximate Recovery in Changepoint Problems, from $\ell_2$
  Estimation Error Rates}.
\newblock page~42, jun 2016.

\bibitem{mammen1997}
E.~Mammen and S.~van~de Geer.
\newblock Locally adaptive regression splines.
\newblock {\em Ann. Statist.}, 25(1):387--413, 02 1997.

\bibitem{moustakides2008}
G.~V. Moustakides.
\newblock Sequential change detection revisited.
\newblock {\em Ann. Statist.}, 36(2):787--807, 04 2008.

\bibitem{Niu2012}
Y.~S. Niu and H.~Zhang.
\newblock {THE SCREENING AND RANKING ALGORITHM TO DETECT DNA COPY NUMBER
  VARIATIONS}.
\newblock {\em The annals of applied statistics}, 6(3):1306--1326, sep 2012.

\bibitem{doi:10.1093/biostatistics/kxh008}
A.~B. Olshen, E.~S. Venkatraman, R.~Lucito, and M.~Wigler.
\newblock Circular binary segmentation for the analysis of array?based dna copy
  number data.
\newblock {\em Biostatistics}, 5(4):557--572, 2004.

\bibitem{Ortelli2018}
F.~Ortelli and S.~van~de Geer.
\newblock {On the total variation regularized estimator over a class of tree
  graphs}.
\newblock jun 2018.

\bibitem{Qian2016}
J.~Qian and J.~Jia.
\newblock {On stepwise pattern recovery of the fused Lasso}.
\newblock {\em Computational Statistics {\&} Data Analysis}, 94:221--237, feb
  2016.

\bibitem{Qian2013}
J.~Qian and L.~Su.
\newblock Shrinkage estimation of regression models with multiple structural
  changes.
\newblock Available at http://jhqian.org/structure\_change20130629.pdf.

\bibitem{Ramdas2015}
A.~Ramdas and R.~J. Tibshirani.
\newblock Fast and flexible admm algorithms for trend filtering.
\newblock {\em Journal of Computational and Graphical Statistics}, 0(ja):0--0,
  0.

\bibitem{Rojas2014}
C.~R. Rojas and B.~Wahlberg.
\newblock {On change point detection using the fused lasso method}.
\newblock jan 2014.

\bibitem{ruanaidh1996numerical}
J.~Ruanaidh and W.~Fitzgerald.
\newblock {\em Numerical Bayesian Methods Applied to Signal Processing}.
\newblock Statistics and Computing. Springer New York, 1996.

\bibitem{Rudin1992}
L.~I. Rudin, S.~Osher, and E.~Fatemi.
\newblock {Nonlinear total variation based noise removal algorithms}.
\newblock {\em Physica D: Nonlinear Phenomena}, 60(1-4):259--268, nov 1992.

\bibitem{pmlr-v22-sharpnack12}
J.~Sharpnack, A.~Singh, and A.~Rinaldo.
\newblock Sparsistency of the edge lasso over graphs.
\newblock In N.~D. Lawrence and M.~Girolami, editors, {\em Proceedings of the
  Fifteenth International Conference on Artificial Intelligence and
  Statistics}, volume~22 of {\em Proceedings of Machine Learning Research},
  pages 1028--1036, La Palma, Canary Islands, 21--23 Apr 2012. PMLR.

\bibitem{Tansey2015}
W.~Tansey and J.~G. Scott.
\newblock {A Fast and Flexible Algorithm for the Graph-Fused Lasso}.
\newblock may 2015.

\bibitem{Tibshirani05sparsityand}
R.~Tibshirani, M.~Saunders, S.~Rosset, J.~Zhu, and K.~Knight.
\newblock Sparsity and smoothness via the fused lasso.
\newblock {\em Journal of the Royal Statistical Society Series B}, pages
  91--108, 2005.

\bibitem{Tibshirani2008}
R.~Tibshirani and P.~Wang.
\newblock Spatial smoothing and hot spot detection for cgh data using the fused
  lasso.
\newblock {\em Biostatistics}, 9(1):18--29, 2008.

\bibitem{tibshirani2014}
R.~J. Tibshirani.
\newblock Adaptive piecewise polynomial estimation via trend filtering.
\newblock {\em Ann. Statist.}, 42(1):285--323, 02 2014.

\bibitem{Tibshirani2011}
R.~J. Tibshirani and J.~Taylor.
\newblock {The solution path of the generalized lasso}.
\newblock {\em The Annals of Statistics}, 39(3), June 2011.

\bibitem{doi:10.1093/bioinformatics/btl646}
E.~S. Venkatraman and A.~B. Olshen.
\newblock A faster circular binary segmentation algorithm for the analysis of
  array cgh data.
\newblock {\em Bioinformatics}, 23(6):657--663, 2007.

\bibitem{WytSraKol2014}
M.~Wytock, S.~Sra, and J.~Z. Kolter.
\newblock Fast newton methods for the group fused lasso.
\newblock In {\em Proceedings of the 30th Conference on Uncertainty in
  Artificial Intelligence}, pages 888--897. AUAI Press, 2014.

\bibitem{Yao1988}
Y.-C. Yao.
\newblock {Estimating the number of change-points via Schwarz' criterion}.
\newblock {\em Statistics {\&} Probability Letters}, 6(3):181--189, feb 1988.

\bibitem{Yao1989}
Y.-C. Yao and S.~T. Au.
\newblock Least-squares estimation of a step function.
\newblock {\em Sankhyā: The Indian Journal of Statistics, Series A
  (1961-2002)}, 51(3):pp. 370--381, 1989.

\bibitem{doi:10.1080/15326348808807089}
Y.~Yin.
\newblock Detection of the number, locations and magnitudes of jumps.
\newblock {\em Communications in Statistics. Stochastic Models}, 4(3):445--455,
  1988.

\bibitem{zhang2002}
C.-H. Zhang.
\newblock Risk bounds in isotonic regression.
\newblock {\em Ann. Statist.}, 30(2):528--555, 04 2002.

\bibitem{Zhu2015}
Y.~Zhu.
\newblock An augmented admm algorithm with application to the generalized lasso
  problem.
\newblock {\em Journal of Computational and Graphical Statistics}, 0(ja):0--0,
  0.

\end{thebibliography}
\end{document}